\newtheorem{theorem}{Theorem}
\theoremstyle{plain}
\newtheorem{proposition}[theorem]{Proposition}
\newcommand{\bE}{\mathbb{E}}
\newcommand{\bR}{\mathbb{R}}
\newcommand{\mfu}{\mathfrak{u}}
\def\mfz{\mathfrak{z}}
\def\mfh{\mathfrak{h}}
\newcommand\bel[1]{\begin{equation}\label{#1}}
\newcommand\ee{\end{equation}}
\newcommand\bld[1]{\boldsymbol{#1}}
\def\wc{\overset{d}{=}}
\def\wcL{\overset{{\mathcal{L}}}{=}}
\def\wth{\widehat{\theta}}
\def\dX{\dot{X}}
\def\vs{\vskip 0.1in}
\def\oas{0_{\mathrm{a.s.}}}
\numberwithin{equation}{section}
\numberwithin{theorem}{section}
\begin{document}

\today

\title[Estimation in linear CAR(2)]{Second-order continuous-time non-stationary
Gaussian autoregression}
\author{N. Lin}
\curraddr[N. Lin]{Department of Mathematics, USC\\
Los Angeles, CA 90089 USA\\
tel. (+1) 213 821 1480; fax: (+1) 213 740 2424}
\email[N. Lin]{nlin@usc.edu}
\author{S. V. Lototsky}
\curraddr[S. V. Lototsky (corresponding author)]
{Department of Mathematics, USC\\
Los Angeles, CA 90089 USA\\
tel. (+1) 213 740 2389; fax: (+1) 213 740 2424}
\email[S. V. Lototsky]{lototsky@usc.edu}
\urladdr{http://www-rcf.usc.edu/$\sim$lototsky}

\subjclass[2000]{Primary  62F12; Secondary 62F03, 62M07, 62M09}
\keywords{Lyapunov Exponent,
Maximum Likelihood Estimation,
Asymptotic Mixed Normality,
 Non-Normal Limit Distribution,
 Rate of Convergence,
   Second-Order Stochastic Equation}

 \begin{abstract}
 The objective of the paper is to identify and investigate all possible
 types of asymptotic behavior for the maximum likelihood estimators of
 the unknown parameters in the second-order linear stochastic ordinary differential
 equation driven by Gaussian white noise. The emphasis is on the non-ergodic case, when
 the roots of the corresponding characteristic equation are not both in the left half-plane.
\end{abstract}

\maketitle

\section{Introduction}

Consider the stochastic ordinary differential  equation
\begin{equation}
\label{eq00}
\ddot{X}(t)=\theta_1\dot{X}(t)+\theta_2 X(t)+\sigma\dot{W}(t),\ t>0,
\end{equation}
with a standard Brownian motion $W=W(t)$,  non-random initial conditions
$X(0)$, $\dX(0)$, and two real parameters $\theta_1, \theta_2$.
 Equation \eqref{eq00} is often referred to as a continuous time
auto-regression of second order, or CAR(2), being a particular case of
 CAR(N)
\begin{equation}
\label{CAR(N)}
X^{(N)}=\sum_{k=0}^{N-1} \theta_{N-k}X^{(k)}+\dot{W};
\end{equation}
see \cite{BDY-CAR}.
A  rigorous  interpretation of equation \eqref{eq00} is the
system
\begin{equation}
\label{eq000}
dX=\dot{X}dt, \ d\dot{X}=(\theta_2 X+\theta_1\dot{X})dt +\sigma dW(t).
\end{equation}
 In the matrix-vector form,  system \eqref{eq000}  becomes a
particular case of the multi-di\-men\-sional Ornstein-Uhlenbeck process studied in \cite{BasakLee}:
\begin{equation}
\label{OU-Md}
d\bld{X}(t)=\Theta \bld{X}(t) dt + \bld{\sigma} dW(t),
\end{equation}
with
$$
\bld{X}(t)=
\left(
\begin{array}{l}
X(t)\\
\dX(t)
\end{array}
\right),\
\Theta=
\left(
\begin{array}{ll}
0 &1\\
 \theta_2 &  \theta_1
 \end{array}
 \right), \
\bld{\sigma}=
\left(
\begin{array}{l}
0\\
\sigma
\end{array}
\right).
 $$
The estimator studied in \cite{BasakLee} is
\begin{equation}
\label{BL}
\widehat{\Theta}_T=\left(\int_0^T\big(d\bld{X}(t) \bld{X}^{\top}(t)\big)\right)
\left(\int_0^T \bld{X}(t)\bld{X}^{\top}(t)dt\right)^{-1}.
\end{equation}
It is strongly consistent as $T\to \infty$, and this implies strong consistency of the
maximum likelihood estimators for $\theta_1$ and $\theta_2$ in
\eqref{eq00} for all $(\theta_1, \theta_2)\in \bR^2$; see \cite{BasakLee} or Theorem \ref{th-consist} below.

When the process $\bld{X}$ defined by \eqref{OU-Md} is ergodic
(equivalently, when all eigenvalues of the matrix
$\Theta$ are in the left half-plane),
it is known \cite[Theorem 4.6.2]{Arato}  that the
 estimator $\widehat{\Theta}_T$ is asymptotically normal
 with rate $T^{1/2}$.
  Some non-ergodic models of the type \eqref{OU-Md} have also been
 studied (see, for example,  \cite{Luschgy-LAMN}). The key object in the
analysis of a maximum likelihood estimator  (MLE) is  the normalized
log-likelihood ratio. A desirable property of this normalized
log-likelihood ratio is local asymptotic normality (LAN), as it implies
certain  efficiency of the  MLE; see \cite[Chapter II]{IKh}.
While the LAN property is typically associated with ergodic models,
it can also hold in some non-ergodic models
\cite{KhasJan}. Still, in many non-ergodic models, LAN is replaced with
LAMN (local asymptotic mixed normality), leading to a different kind of
 efficiency of the MLE; see \cite[Chapter 5]{LeCamY}.

 So far, analysis of \eqref{OU-Md} in general and \eqref{eq00} in particular
was aimed at identifying the particular cases that fit in either LAN or LAMN framework. It turns out that many of the non-ergodic regimes of \eqref{eq00}
lead to new asymptotic forms of the normalized log-likelihood ratio
and to new types of asymptotic behavior of the MLE. In the current paper, we
present a complete asymptotic analysis of  the
MLE and the likelihood ratio for all possible values of the
unknown parameters $(\theta_1, \theta_2)\in \bR^2$.
Beside purely theoretical interest,
this analysis can help in the investigation of the corresponding discrete-time models (see \cite{BDY-CAR} in the ergodic case).

The rest of the paper is organized as follows. Section \ref{S2}
  states the main results  and discusses the results in the broader context of statistical estimation.
  After some preliminary work in Section \ref{Sec4}, the proofs are in
Section \ref{S4}, followed by a brief summary in Section \ref{S5}.

We fix a stochastic basis
 $(\Omega, \mathcal{F}, \{\mathcal{F}_t\}_{t\geq 0}, \mathbb{P})$ with
a standard Brownian motion $W=W(t)$. Other common notations are
 $\mathbb{E}$
 for  the expectation with respect to $\mathbb{P}$,
 $\sqrt{-1}$ for the imaginary unit,
 ${}^{\top}$ for the transpose of a vector or a matrix,
  $\dot{g}(t)$, $\ddot{g}(t)$ for the first and second time
  derivatives of the function $g$,
 $\wc$ for  equality in distribution of random variables,
 $\wcL$ for equality in law of random processes,
 and $\oas(T)$ to denote a process converging to zero with
probability one as $T\to \infty$.

\section{Summary of the main results}

To write the likelihood ratio for equation \eqref{eq00}, define the vectors
\begin{equation}
\label{eq:vec}
 \bld{\theta}=
 \left(
 \begin{array}{l}
 \theta_2\\
 \theta_1
 \end{array}
 \right),\ \
\bld{X}(t)=
\left(
 \begin{array}{l}
X(t)\\
\dX(t)
 \end{array}
 \right).
 \end{equation}
Then equation \eqref{eq00} becomes
\begin{equation}
\label{eqv}
d\dX=\bld{\theta}^{\top}\bld{X}dt+\sigma dW(t).
\end{equation}
Since $X(t)=X(0)+\int_0^t \dX(s)ds$, it follows from \eqref{eqv} that
 \begin{enumerate}
 \item  $\dX=\dX(t)$ is a diffusion-type process in the sense of Liptser and
Shiryaev (\cite[Definition 4.2.7]{LSh1}) and generates the
measure $P^{\bld{\theta}}_T$ on the space of continuous functions on $[0,T]$,
\item  For every $\bld{\theta}\in \bR^2$, the
  measure $ P^{\bld{\theta}}_T$ is absolutely continuous with respect to the
  measure $P^{\bld{0}}_T$ generated by the process
  $\dX(0)+ \sigma W(t),\ 0\leq t\leq T$, and
\begin{equation}
\begin{split}
\label{LR}
&\frac{dP^{\bld{\theta}}_T}{dP^{\bld{0}}_T}(\dX)\\
&=
\exp\left(\frac{1}{\sigma^2} \int_0^T (\theta_2 X(t)+\theta_1\dX(t))d\dX(t)
-\frac{1}{2\sigma^2}\int_0^T(\theta_2\,X(t)+\theta_1\dX(t))^2dt\right);
\end{split}
\end{equation}
see  \cite[Theorem 7.6]{LSh1}.
\end{enumerate}
The  expressions for the maximum likelihood estimators $\wth_{1,T}$ of $\theta_1$ and $\wth_{2,T}$ of $\theta_2$,
using  continuous-time observations of
 both $X(t)$, and $\dot{X}(t)$, $0\leq t\leq T$, now follow from
 \eqref{LR}:
 \begin{equation}
 \label{MLE-main}
 \begin{split}
 \wth_{1,T}&=\frac{\int_0^T X^2(t)dt\int_0^T \dX(t) d\dX(t)
 -\int_0^TX(t)\dX(t)dt\int_0^TX(t)d\dX(t)}
 {\int_0^TX^2(t)dt \int_0^T \dX^2(t)dt-\left(\int_0^TX(t)\dX(t)dt\right)^2},\\
 \wth_{2,T}&=\frac{\int_0^T \dX^2(t)dt\int_0^T X(t) d\dX(t)
 -\int_0^TX(t)\dX(t)dt\int_0^T\dX(t)d\dX(t)}
 {\int_0^TX^2(t)dt \int_0^T \dX^2(t)dt-\left(\int_0^TX(t)\dX(t)dt\right)^2}.
 \end{split}
 \end{equation}
 Similar to \eqref{eq:vec} we write
\begin{equation}
\label{vec-e}
\widehat{\bld{\theta}}_T=
 \left(
 \begin{array}{l}
 \wth_{2,T}\\
 \wth_{1,T}
 \end{array}
 \right).
 \end{equation}

The amount of  integration in \eqref{MLE-main}
can be reduced using rules of the usual and stochastic calculus and
keeping in mind that the processes $X$ is continuously differentiable and
 the process $\dot{X}$ is
a continuous semi-martingale with  quadratic variation equal to $\sigma^2t$:
\begin{equation}
\label{integrals}
\begin{split}
\int_0^T{X}(t)\dot{X}(t)dt&=\int_0^TX(t)dX(t)=\frac{X^2(T)-X^2(0)}{2}, \\
\int_0^T \dot{X}(t)d\dot{X}(t)&=\frac{\dot{X}^2(T)-\sigma^2 T-\dX^2(0)}{2},\\
\int_0^TX(t)d\dot{X}(t)&=X(T)\dot{X}(T)-X(0)\dX(0)-\int_0^T \dot{X}^2(t)dt.
\end{split}
\end{equation}
 With the continuous time observations, the value of
 $\sigma$ can be assumed known because the quadratic
 variation process of $\dot{X}$ at time $t$, an observable quantity, is $\sigma^2t$. Note also that $\sigma$ does not appear in the formulas
 \eqref{MLE-main}.

 The measure  $P^{\bld{0}}_T$ in \eqref{LR} corresponds to the solution of
 \eqref{eqv} with $\theta_1=\theta_2=0$.
 Sometimes it is more convenient to work with
  the reference measure coming from some other solution of
  \eqref{eq00}, for example, the actual observations.  Accordingly, let us fix $\bld{\theta}\in \bR^2$ and
 the corresponding observation process
 $\bld{X}=\bld{X}(t),\ 0\leq t\leq T,$
 satisfying \eqref{eq000}.
 Then, for every $\bld{\vartheta}\in \bR^2$,
 \begin{equation}
 \label{LLR}
\begin{split}
 \ln \frac{dP^{\bld{\vartheta}}_T}{dP^{\bld{\theta}}_T}(\dX)&=
 \frac{1}{\sigma^2} \int_0^T \big( \bld{\vartheta}^{\top}\bld{X}(t)-
  \bld{\theta}^{\top}\bld{X}(t)\big)d\dX(t)\\
&- \frac{1}{2\sigma^2}
\int_0^T \Big(\big(\bld{\vartheta}^{\top}\bld{X}(t)\big)^2
-\big(\bld{\theta}^{\top}\bld{X}(t)\big)^2\Big)dt;
 \end{split}
 \end{equation}
 see \cite[Theorem 7.19]{LSh1}.
 With $\bld{\theta}$ and $\bld{X}$ fixed, we write
 $$
 L_T(\vartheta)=
 \ln \frac{dP^{\bld{\vartheta}}_T}{dP^{\bld{\theta}}_T}(\dX).
 $$
 Define the matrix
\begin{equation}
\label{FI}
\Psi_T=\int_0^T \bld{X}(t)\bld{X}^{\top} (t) dt =
\left(
\begin{array}{ll}
\int_0^T X^2(t)dt & \int_0^T X(t)\dX(t) dt\\
 & \\
\int_0^T X(t)\dX(t) dt & \int_0^T \dX^2(t) dt
\end{array}
\right),
\end{equation}
 Then \eqref{eqv} and \eqref{LLR} imply
 \begin{align}
 \notag
 L_T(\vartheta)&=
 \frac{1}{\sigma}
  \int_0^T \big( \bld{\vartheta}-\bld{\theta}\big)^{\top}\bld{X}(t)dW(t)
- \frac{1}{2\sigma^2}
\int_0^T \big(\bld{\vartheta}-\bld{\theta}\big)^{\top}\bld{X}(t)\bld{X}^{\top}(t)
\big(\bld{\vartheta}-\bld{\theta}\big) dt\\
&
\label{LLR1}
= \frac{1}{\sigma}
  \int_0^T \big( \bld{\vartheta}-\bld{\theta}\big)^{\top}\bld{X}(t)dW(t)
- \frac{1}{2\sigma^2}
 \big(\bld{\vartheta}-\bld{\theta}\big)^{\top}\Psi(T)
\big(\bld{\vartheta}-\bld{\theta}\big).
 \end{align}

We address the following  questions about the
 estimators $\wth_1(T), \wth_2(T)$:
 \begin{enumerate}
 \item {\bf rate of convergence}, that is, finding positive deterministic functions
 $v_i(T)$, $i=1,2$, such that, as $T\to \infty$, $v_i(T)\nearrow +\infty$ and
  $ v_i(T)\big(\wth_{i,T}-\theta_i\big)$
   converge in distribution to non-degenerate
  random variables, and identifying the corresponding limit distributions;
  \item existence of a {\bf normal limit with a random rate} (NLRR),
  that is, finding a random matrix $R=R_T$ such that, as
  $T\to \infty$, $R_T(\widehat{\bld{\theta}}_T-\bld{\theta})$ converges in
  distribution to a Gaussian random vector.  The hope is that at least one of the two things happens: (a) the random rate leads to a normal limit when a deterministic rate does not; (b) neither the matrix $R_T$
   nor the parameters of the limit distribution depend explicitly on the $\theta_1, \theta_2$.
   \item {\bf local asymptotic structure} of the normalized log-likelihood
  ratio, that is, analyzing
   \begin{equation}
   \label{LAQ1}
   \ell_T(\bld{u})=L_T(\bld{\theta}+A_T\bld{u}),
   \end{equation}
   as $T\to \infty$, where $\bld{\theta}$ is fixed, 
   $A_T\in \bR^{2\times 2}$ is a suitably chosen
   deterministic  matrix with $\lim_{T\to \infty} |A_T|=0$ 
   (any matrix norm will work), and $\bld{u}\in \bR^2$.
   \end{enumerate}

   Let us recall some definitions related to the normalized
   log-likelihood ratio \eqref{LAQ1}.  It follows from \eqref{LLR1} that
    \begin{equation}
  \label{LAQ}
 \ell_T(\bld{u})= \frac{1}{\sigma}
 \int_0^T\Big(\bld{u}^{\top} A_T \bld{X}(t)\Big)\,dW(t)-
 \frac{1}{2\sigma^2} \bld{u}^{\top}A_T^{\top}\Psi_TA_T\bld{u}.
  \end{equation}
   With a suitable choice of the matrix $A_T$,  there exists a non-trivial limit
   in distribution
   \begin{equation}
   \label{LAQ-lim}
   \ell_{\infty}(\bld{u}) = \lim_{T\to \infty} \ell_T(\bld{u}).
   \end{equation}
   Three particular cases of $\ell_T$ satisfying \eqref{LAQ-lim}
    are of special interest:
  \begin{itemize}
  \item {\tt Local Asymptotic Normality} (LAN), if there exists a bivariate normal vector $\bld{\xi}$ with mean zero and non-degenerate
      covariance matrix
      $\Sigma_{\bld{\xi}}$ such that, for every $\bld{u}\in \bR^2,$
      \begin{equation}
  \label{LAN}
  \ell_{\infty}(\bld{u})=
   \frac{1}{\sigma}\bld{u}^{\top} \bld{\xi}-\frac{1}{2\sigma^2} \bld{u}^{\top}\Sigma_{\bld{\xi}}\bld{u}.
   \end{equation}
   \item {\tt Local  Asymptotic Mixed Normality} (LAMN) if there
   exist a bivariate normal vector $\bld{\eta}$ with zero mean and unit covariance matrix,  and a random symmetric positive definite matrix
   $B\in \bR^{2\times 2}$
   such that  $B$ and $\bld{\eta}$ are independent and,  for every $\bld{u}\in \bR^2$,
  \begin{equation}
  \label{LAMN}
   \ell_{\infty}(\bld{u})=\frac{1}{\sigma}\bld{u}^{\top}B^{1/2}\bld{\eta}- \frac{1}{2\sigma^2}\bld{u}^{\top}B\bld{u}.
  \end{equation}
 If \eqref{LAMN} holds with a degenerate matrix $B$, we refer to $\ell_{T}$ as DLAMN (degenerate locally asymptotically mixed normal).
  \item {\tt Local Asymptotic Brownian Functional} structure (LABF) if
  \begin{equation}
  \label{LABF}   \ell_{\infty}(\bld{u})=\frac{1}{\sigma}\int_0^1\bld{u}^{\top}G(t)d\bld{w}(t)- \frac{1}{2\sigma^2}\int_0^1\bld{u}^{\top}G(t)G^{\top}(t)\bld{u}\,dt,
  \end{equation}
  where $G\in \bR^{2\times 2}$
  is an adapted process, $\bld{w}\in \bR^2$
  is a standard Brownian motion, and the pair $(G,\bld{w})$ is a  Gaussian process.
 \end{itemize}
 The definitions of LAN, LAMN, and LABF extend to more general likelihood
 ratios and to any finite number of unknown parameters. For details see
 \cite[Chapter II]{IKh} (LAN), \cite[Chapter 1]{BasawaScott} (LAMN),
 and \cite[Section 2]{Jeg-Gen} (unified approach to LAN, LAMN, and LABF).

  The  LAN and LAMN properties of $\ell_T$
  imply certain asymptotic efficiency of the corresponding
  maximum likelihood estimator (MLE):
  \begin{enumerate}
  \item If $\ell_T$ is LAN, then the corresponding MLE is
  asymptotically efficient in the sense of achieving the lower bound in the
  Cramer-Rao inequality; for details see \cite[Theorem II.12.1]{IKh}.
  \item If $\ell_T$ is LAMN, then the corresponding MLE has the
  maximal concentration property; for details,
  see \cite[Theorem 2.2.1]{BasawaScott}. Note that the result requires
   non-degeneracy of the matrix $B$ and therefore does not
  immediately extend to DLAMN.
  \end{enumerate}
In the  LABF case, there are results about asymptotic efficiency of Bayessian
 estimators \cite[Section 3, Proposition 10]{Jeg-Gen} and sequential estimators
 \cite[Theorem 2]{Grw-Wfl}.

   To put our results in perspective, let us recall the estimation problem
    of the drift $\theta$ in the CAR(1) model,
which is the one-dimensional OU process $Y=Y(t)$ defined by
\begin{equation}
\label{OU}
dY(t)=\theta Y(t)dt + \sigma dW(t).
\end{equation}

Here is a summary of the results. For details, see \cite{Feigin, Kut2}.
\begin{itemize}
\item The maximum likelihood estimator $\wth_T$ of $\theta$ using  the
observations of $Y(t),\ 0\leq t\leq T$ is
\begin{equation}
 \label{OU-MLE}
 \wth_T=\frac{\int_0^T Y(t)dY(t)}{\int_0^TY^2(t)dt};
 \end{equation}
 the estimator is strongly consistent as $T\to \infty$:
 $\lim_{T\to\infty} \wth_T=\theta$ with probability one for all
 $\theta\in \mathbb{R}$.
 \item If $\theta<0$ (asymptotically stable or ergodic case), then
 \bel{OU-A1}
 \lim_{T\to \infty} \sqrt{{|\theta|T}}(\wth_T-\theta) \wc \sqrt{2}\,|\theta|\, \xi,
 \ee
 where $\xi$ is a standard normal random variable.
 \item If $\theta=0$ (neutrally stable case), then
 \bel{OU-A2}
 \lim_{T\to \infty} T(\wth_T-\theta) \wc \frac{w^2(1)-1}{2\int_0^1w^2(s)ds},
 \ee
  where $w=w(s),\ 0\leq s\leq 1,$ is a standard Brownian motion.
 \item If $\theta>0$ (unstable or explosive case), then
 \bel{OU-A3}
 \lim_{T\to \infty}{e^{\theta T}}
  (\wth_T-\theta) \wc {2\theta}\,\frac{\eta}{\xi+c},
  \ee
  where $\xi=\sqrt{2\theta} \int_0^{\infty} e^{-\theta t}dW(t)$ is a
  standard normal random variable,
  $\eta$ is a standard normal random variable independent of $\xi$,
  and $c=\sqrt{2\theta}\,Y(0)/\sigma$. In particular, if $Y(0)=0$, then
  the limit has the Cauchy distribution and does not depend on $\sigma$.
  \item If $\theta\not=0$, then NLRR holds:
  \bel{OU-A4}
  \lim_{T\to \infty} \left(\int_0^T Y^2(t)dt\right)^{1/2} \big(\wth_T-\theta\big)
   \wc \sigma \,\eta,
  \ee
where $\eta$ is a standard normal random variable.
  \item The normalized log-likelihood ratio
  $$
 \ell_T(u)=
\frac{u}{\sigma}\,\frac{ \int_0^T Y(t)dW(t)}{\left(\bE\int_0^T Y^2(t)dt\right)^{1/2}}-\frac{u^2}{2\sigma^2}\,
\frac{\int_0^TY^2(t)dt}{\bE\int_0^T Y^2(t)dt},\ \ u\in \bR.
$$
is  LAN if  $\theta<0$, LABF if $\theta=0$, and  LAMN if  $\theta>0$.
  \end{itemize}
 Note that (a) the limit distributions in \eqref{OU-A1}, \eqref{OU-A2}, and
 \eqref{OU-A4} do not depend on the initial condition; (b) equality \eqref{OU-A4}
 illustrates the attractive features of NLRR: the rate
 $$
 R(T)=\left(\int_0^T Y^2(t)dt\right)^{1/2}
 $$
 does not explicitly depend on $\theta$ and  the limit distribution does not
  depend on $\theta$ or the initial conditions.

 Table 1 summarizes the results, where  $F_d(w)$  denotes a
 generic functional of the standard $d$-dimensional Brownian motion and $\mathrm{Ch}$ denotes
 a Cauchy-type distribution (ratio of two independent normal random variables).

 \begin{center}
 \begin{table}[ht]
 \label{tab1}
 \caption{Estimation in CAR(1)}
 \begin{tabular}{lcccc}
 Parameter \phantom{$[t]_{H_H}$}& Rate & LD & $\ell_T$& NLRR   \\ \hline

$\theta<0$ \phantom{$\widehat{[t]}^{H}$}&$\sqrt{T} $ & $\mathcal{N}$ & LAN & Yes \\

 $\theta=0$ \phantom{$\widehat{[t]}^{H}$}&$T$        &  $F_1(w)$ & LABF  & No\\
$\theta>0$ \phantom{$\widehat{[t]}^{H}$}& $e^{\theta T}$       &  Ch &
 LAMN & Yes\\
 \end{tabular}
 \end{table}
 \end{center}

Let us now turn to equation \eqref{eq00}.
Asymptotic behavior of estimators \eqref{MLE-main}  depends on
  the roots $p,q$ of the characteristic equation
  \begin{equation}
 \label{CharEq}
 r^2-\theta_1r-\theta_2=0.
 \end{equation}
 There are nine cases to consider:
 \begin{enumerate}
 \item The asymptotically stable (ergodic) case, when $\theta_1<0$ and $\theta_2<0$.
 All in all, there are three possibilities for the roots:
 $q<p<0,\  q=p<0,$ or $p=\lambda+ \sqrt{-1}\,\nu,\ q=\lambda- \sqrt{-1}\,\nu,$ with $\lambda<0$, but the asymptotic behavior
 of the estimators is the same in all three cases.
 \item Six non-ergodic  cases with real $p,q$:
  $q<p=0$, $q<0<p$,
 $p>q=0$, $p>q>0$, $p=q>0$, or $p=q=0$.
 \item The harmonic oscillator, when
 $p= \sqrt{-1}\, \nu, \ q=-\sqrt{-1}\, \nu,\ \nu>0$;
 \item Unstable oscillations, when $p=\lambda+ \sqrt{-1}\,\nu,$\
 $q=\lambda- \sqrt{-1}\,\nu$, and  $\lambda>0, \nu>0$.
 \end{enumerate}

 Table 2 summarizes the results for CAR(2).
 The detailed statements  are in Section \ref{S2}.
 In Table 2, we use the same notations as in Table 1. In particular, $F_d(w)$
 denotes a functional of the standard $d$-dimensional Brownian motion and
 $\mathrm{Ch}$ is a Cauchy-type distribution.

 \begin{center}
 \begin{table}[ht]
 \label{tab4}
 \caption{Estimation in CAR(2)}
 \begin{tabular}{lccccll}
 Case  &$v_1$ & $v_2$ & LD$_1$ & LD$_2$ &  NLRR & $\ell_{T}$ \phantom{$[t]_{H_H}^{H^H}$} \\
  \hline
 $\theta_1<0, \  \theta_2<0$ & $\sqrt{T}$ & $\sqrt{T}$ & $\mathcal{N}$& $\mathcal{N}$& Yes & LAN  \phantom{$[t]_{H_H}^{H^H}$}\\
 $q<0<p$ &  $\sqrt{T}$ & $\sqrt{T}$ & $\mathcal{N}$& $\mathcal{N}$& Yes & DLAMN  \phantom{$[t]_{H_H}^{H^H}$}\\
 $0<q<p$ & $e^{qT}$& $e^{qT}$ & Ch & Ch & Yes & DLAMN  \phantom{$[t]_{H_H}^{H^H}$}\\
 $0<q=p$ & $T^{-1}e^{qT}$ & $T^{-1}e^{qT}$ & Ch & Ch & Yes & DLAMN  \phantom{$[t]_{H_H}^{H^H}$}\\
 $q<p=0$ & $\sqrt{T}$ & $T$ & $\mathcal{N}$ & $F_1(w)$ & Yes $(\wth_{1,T})$ & LABF/LAN \phantom{$[t]_{H_H}^{H^H}$}\\
 $q=0<p$ & $T$ & $T$ & $F_1(w)$ & $F_1(w)$ & No & DLAMN  \phantom{$[t]_{H_H}^{H^H}$}\\
 $q=p=0$ & $T$ & $T^2$ & $F_1(w)$ & $F_1(w)$ & No & LABF  \phantom{$[t]_{H_H}^{H^H}$}\\
 $\Re(p)=0$ & $T$ & $T$ & $F_2(w)$ & $F_2(w)$ & No & LABF  \phantom{$[t]_{H_H}^{H^H}$}\\
$\Re (p)=\lambda>0$ & $e^{\lambda T}$& $e^{\lambda T}$ & Many & Many & Yes & LAMN family  \phantom{$[t]_{H_H}^{H^H}$}\\
 \end{tabular}
 \end{table}
 \end{center}

There are obvious similarities between CAR(1) and CAR(2)
in  the ergodic case and,
 perhaps less obvious, similarities in the neutrally stable case
 ($\theta=0$ in CAR(1) compared to $p=\sqrt{-1}\nu$ in CAR(2))
 and in the exponentially unstable cases ($\theta>0$ in CAR(1) compared to
 real $p,q>0$ in CAR(2)). In both CAR(1) and CAR(2),  the rate
$\sqrt{T}$ corresponds to normal distribution in the limit, exponential
rate corresponds  to a Cauchy-type distribution, and any rate polynomial in $T$
leads to  some functional of the standard Brownian motion.
In the case of the positive double  root  ($p=q>0)$ the rate is
slightly slower than exponential, but the limit distribution is still
of the  Cauchy type. Also of interest are (a) several appearances  of
 DLAMN instead  of LAMN, (b)  an unusual combination of asymptotic
 normality of the estimators and DLAMN (rather than LAN)  of $\ell_{T}$
 when  $q<0<p$, (c)   relative compactness rather than convergence in
 distribution for both the estimators and the normalized log-likelihood  ratio
 when $p=\lambda+\sqrt{-1}\nu$, $\lambda>0$.

 It is instructive to compare CAR(2) with $p=\lambda+\sqrt{-1}\nu$,
$\lambda>0, \nu>0$,  and the
 example considered in \cite[Section 4.1]{Luschgy-LAMN}:
 \begin{equation}
 \label{Arato}
 \left(
 \begin{array}{l}
 dX_1(t)\\
 dX_2(t)
 \end{array}
 \right)
 =
 \left(
 \begin{array}{lr}
 \lambda & -\nu\\
 \nu & \lambda
 \end{array}
 \right)
 \left(
 \begin{array}{l}
 X_1(t)\\
 X_2(t)
 \end{array}
 \right)
 dt
 +
 \left(
 \begin{array}{l}
 dW_1(t)\\
 dW_2(t)
 \end{array}
 \right).
 \end{equation}
 While the eigenvalues of the matrix in \eqref{Arato} are also
  $\lambda \pm \sqrt{-1} \nu$,
 the special structure of the model ensures that the
 normalized local log-likelihood ratio is LAMN and the MLEs of
 $\lambda$ and $\nu$, when
 normalized by $\sqrt{2}\,\lambda e^{\lambda T}$,
 converge to a joint limit (which, for zero initial conditions,
  is the bivariate $t_2$-distribution).

\vskip 0.1in

\section{Asymptotic properties of the MLE and the normalized
log-likelihood ratio}
\label{S2}

Strong consistency of
\eqref{MLE-main} is a consequence of a more general
result by Basak and Lee \cite{BasakLee}. For the sake of
completeness, here are the statement and the proof.

\begin{theorem}
\label{th-consist}
 With probability one,
$\lim_{T\to \infty} \wth_{1,T}=\theta_1$ and
$\lim_{T\to \infty} \wth_{2,T}=\theta_2$ for every
$\bld{\theta}\in \bR^2$.
\end{theorem}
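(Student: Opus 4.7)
The plan is to reduce the strong consistency of the scalar MLEs in \eqref{MLE-main} to the strong consistency of the full matrix estimator $\widehat{\Theta}_T$ from \eqref{BL}, which \cite{BasakLee} established for the general OU system \eqref{OU-Md}. The reduction is essentially bookkeeping: one simply identifies the rows of $\widehat{\Theta}_T$ in the CAR(2) setting.

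First I would compute $\int_0^T d\bld{X}(t)\bld{X}^\top(t)$ row by row. Since $X$ is continuously differentiable with $dX=\dX\,dt$, the top row equals $\bigl(\int_0^T X\dX\,dt,\ \int_0^T \dX^2\,dt\bigr)$, which coincides with the second row of $\Psi_T$; right-multiplying by $\Psi_T^{-1}$ therefore returns the row vector $(0,1)$, matching the first row of $\Theta$. The bottom row of $\int_0^T d\bld{X}\bld{X}^\top$ is $\bigl(\int_0^T X\,d\dX,\ \int_0^T \dX\,d\dX\bigr)$, and right-multiplying by the cofactor form $\Psi_T^{-1}=(\det\Psi_T)^{-1}\bigl(\begin{smallmatrix}\int\dX^2dt & -\int X\dX dt\\ -\int X\dX dt & \int X^2 dt\end{smallmatrix}\bigr)$ reproduces exactly the pair $(\wth_{2,T},\wth_{1,T})$ from \eqref{MLE-main}. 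Consequently the second row of $\widehat{\Theta}_T$ is $(\wth_{2,T},\wth_{1,T})$, and the almost sure convergence $\widehat{\Theta}_T\to\Theta$ proved in \cite{BasakLee} yields the claim componentwise.

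If instead one preferred a self-contained argument that did not invoke \cite{BasakLee}, the natural starting point would be the error representation obtained by substituting $d\dX=\bld{\theta}^\top\bld{X}\,dt+\sigma\,dW$ into the numerators of \eqref{MLE-main}, yielding
$$\widehat{\bld{\theta}}_T-\bld{\theta}=\sigma\,\Psi_T^{-1}\bld{M}_T,\qquad \bld{M}_T=\left(\int_0^T X(t)\,dW(t),\ \int_0^T \dX(t)\,dW(t)\right)^{\!\top},$$
followed by the strong law for continuous local martingales, which gives $M_t/\langle M\rangle_t\to 0$ a.s.\ on $\{\langle M\rangle_\infty=\infty\}$. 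The quadratic variations of the two entries of $\bld{M}_T$ are the diagonal entries of $\Psi_T$, so the main obstacle in a self-contained proof would be to control $\Psi_T^{-1}$ uniformly across the nine qualitatively different regimes catalogued in Table 2 (in particular the non-ergodic cases with a degenerate or anisotropic growth of $\Psi_T$). This is exactly the case analysis carried out in much greater generality in \cite{BasakLee}, so invoking their result sidesteps that obstacle entirely and keeps the proof short.
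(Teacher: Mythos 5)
Your proposal is correct and follows essentially the same route as the paper: the paper's proof likewise identifies $\widehat{\Theta}_T$ from \eqref{BL} as the matrix with rows $(0,1)$ and $(\wth_{2,T},\wth_{1,T})$ and then cites the strong consistency result of Basak and Lee. Your explicit row-by-row verification and the remarks on a self-contained alternative are sensible additions but do not change the argument.
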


\begin{proof}
With  $\widehat{\Theta}_T$ defined in \eqref{BL}, we find using \eqref{eq00} that
$$
\widehat{\Theta}_T=
\left(
\begin{array}{ll}
0 &1\\
 \wth_{2,T} &  \wth_{1,T}
 \end{array}
 \right).
 $$
The statement of the theorem now follows from
   \cite[Theorem 2.1 and Remark 3.1]{BasakLee}.
\end{proof}

Next, we present the theorem describing the limit distributions of
$\wth_{1,T}$ and $\wth_{2,T}$.
The proofs are in  Section \ref{S4}.
To keep visual track of the formulas, it is convenient to think of the
process $X$ in \eqref{eq00} as  a dimensionless quantity and to measure $t$
 in the units $[t]$ of time.
 Table 3 summarizes the resulting dimensions of all the variables and parameters
  in the problem.
 \vskip 0.1in
 \begin{center}
 \begin{table}[ht]
 \label{tab2}
 \caption{Dimensions in CAR(2)}
 \begin{tabular}{lcccccc}
 Quantity \phantom{$[t]_{H_H}$}&$t$ & $X(t)$&$W(t)$&
 $\dX(t),\,\theta_1,\, \wth_{1,T},\,p,\,q$ & $\sigma$ &
 $\ddot{X}(t),\,\theta_2,\,\wth_{2,T}$ \\
 \hline
 Units \phantom{$\widehat{[t]}^{H}$}&$[t]$ &  None&$[t]^{1/2}$& $[t]^{-1}$
 & $[t]^{-3/2}$ & $[t]^{-2}$
 \end{tabular}
 \end{table}
 \end{center}

 One caveat: the auxiliary Brownian motion $w(s), \ 0\leq s\leq 1$, and its
 parameter $s$, as well as all random variables appearing in the limit
 distributions, are dimensionless.

\begin{theorem}[Rate of convergence and limit distributions]
\label{th2}
${ } $ \\
 {\sc I.  Ergodic case.}  Assume that $\theta_1<0$ and $\theta_2<0$, and
 let $\eta_1, \eta_2$ be  iid standard normal
random variables. Then
\begin{equation}
\label{erg1}
\lim_{T\to \infty} \sqrt{T|\theta_1|}(\wth_{1,T}-\theta_1) \wc
\sqrt{2}|\theta_1|\, \eta_1,\ \
\lim_{T\to \infty} \sqrt{T|\theta_1|}(\wth_{2,T}-\theta_2) \wc
\sqrt{2|\theta_2|}\,|\theta_1|\, \eta_2.
\end{equation}

For the rest of the theorem, denote by $p$ and $q$ the roots of equation \eqref{CharEq}.

{\sc II. Non-ergodic case: distinct real roots.} Let $\xi, \eta$ be iid standard normal
random variables, and let $w=w(s),\ 0\leq s\leq 1,$  be a standard Brownian
motion independent of $\eta$.

{\sc (a)} If $p>0$ and $q<0$, then
\begin{equation}
\label{c2.1}
\lim_{T\to \infty} \sqrt{|q|{T}}\,(\wth_{1,T}-\theta_1)\wc -\frac{1}{p} \lim_{T\to \infty} \sqrt{|q|{T}}\,(\wth_{2,T}-\theta_2)\wc  \sqrt{2}\, |q|\,\eta,
\end{equation}

 {\sc (b)} If  $p>q>0$, then
\begin{equation}
\label{c4.1}
\lim_{T\to \infty}{e^{qT}}(\wth_{1,T}-\theta_1)\wc
-\frac{1}{p} \lim_{T\to \infty} {e^{qT}}\,(\wth_{2,T}-\theta_2)\wc
\frac{2(p+q)q}{p-q}\, \frac{\eta}{\xi+c},
\end{equation}
where $c=\sqrt{2q}\,\big(\dX(0)-pX(0)\big)/\sigma.$

{\sc (c)} If  $p=0$ and $q<0$, then $\theta_1=q$, $\theta_2=0$, and
\begin{equation}
\label{c1.1}
\lim_{T\to \infty} \sqrt{|\theta_1|T}\big(\wth_{1,T}-\theta_1\big)\wc \sqrt{2}\,|\theta_1|\,\eta,\ \
\lim_{T\to \infty} {T}\wth_{2,T}\wc
|\theta_1|\, \frac{w^2(1)-1}{2\int_0^1w^2(s)ds}.
\end{equation}

{\sc (d)} If  $p>0$ and $q=0$. Then $\theta_1=p$, $\theta_2=0$, and
\begin{equation}
\label{c3.1}
\lim_{T\to \infty} \theta_1T\,(\wth_{1,T}-\theta_1)\wc
- \lim_{T\to \infty} T\wth_{2,T}\wc
\theta_1 \, \frac{w^2(1)-1}{2\int_0^1w^2(s)ds}.
\end{equation}

{\sc III. Non-ergodic case: a double root.}

{\sc (a)} If $p=q>0$, then $\theta_1=2q$, $\theta_2=q^2$, and
\begin{equation}
\label{c5.1}
\lim_{T\to \infty}\frac{e^{qT}}{qT}(\wth_{1,T}-\theta_1)\wc
 -\frac{1}{q}\lim_{T\to \infty} \frac{e^{qT}}{qT}\,(\wth_{2,T}-\theta_2)\wc
4\sqrt{2}\,q \frac{\eta}{\xi+c},
\end{equation}
where $c=\sqrt{2q}\,\big(\dX(0)-pX(0)\big)/\sigma$ and $\xi,\,\eta$
are iid standard normal random variables.

{\sc (b)} If $p=q=0$, then $\theta_1=0$, $\theta_2=0$, and
\begin{equation}
\begin{split}
\label{c6.1}
&\lim_{T\to \infty} T\wth_{1,T}\wc
\frac{2\mfz_3\big(w^2(1)-1\big)-2\mfz_1^2\big(w(1)\mfz_1-\mfz_2\big)}{4\mfz_2\mfz_3-\mfz_1^4},\\
&\lim_{T\to \infty} T^2\wth_{2,T}\wc
\frac{4\mfz_2\big(w(1)\mfz_1-\mfz_2\big)-\mfz_1^2\big(w^2(1)-1\big)}{4\mfz_2\mfz_3-\mfz_1^4},
\end{split}
\end{equation}
 where $w=w(s),\ 0\leq s\leq 1,$ is a standard Brownian motion, and
$$
\mfz_1=\int_0^1 w(s)ds,\ \
\mfz_2=\int_0^1 w^2(s)ds,\ \
\mfz_3=\int_0^1\left( \int_0^t w(s)ds\right)^2 dt.
$$

{\sc IV. Non-ergodic case: complex roots.}

{\sc (a)}  If $p=\sqrt{-1}\nu,\ \nu>0$, then $\theta_1=0$, $\theta_2=-\nu^2$,
and
\begin{equation}
\label{HO-LD}
\begin{split}
&\lim_{T\to \infty} T\wth_{1,T}\wc
\frac{2-w_1^2(1)-w_2^2(1)}{\int_0^1w_1^2(t)dt+
\int_0^1w_2^2(t)dt},\\
& \lim_{T\to \infty}T(\widehat{\theta}_{2,T}-\theta_2)\wc 2\nu
\frac{\int_0^1w_1(t)dw_2(t)-\int_0^1w_2(t)dw_1(t)}{\int_0^1w_1^2(t)dt+
\int_0^1w_2^2(t)dt},
\end{split}
\end{equation}
where $w_1, w_2$ are independent standard Brownian motions.

{\sc (b)} If $p=\lambda+\sqrt{-1}\nu$, $\lambda>0,\ \nu>0$, then, for $i=1,2$,
the families $\{e^{\lambda T}(\wth_{i,T}-\theta_i),\ T> 0\}$
are relatively compact, and all the limit distributions are the form
$$
\frac{\bar{\xi}_c\bar{\eta}_c+\bar{\xi}_s\bar{\eta}_s}{\bar{\xi}_c^2+\bar{\xi}_s^2},
$$
where $(\bar{\xi}_c, \bar{\xi}_s)$ and
$(\bar{\eta}_c,\bar{\eta}_s)$ are independent bivariate normal vectors,   $\bE\bar{\eta}_c=\bE\bar{\eta}_s=0$, and the mean values of
$\bar{\xi}_c$ and $\bar{\xi}_s$ depend on the initial
conditions $X(0), \dX(0)$.

\end{theorem}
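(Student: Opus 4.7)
The starting point for every case is the exact identity
\begin{equation*}
\widehat{\bld{\theta}}_T - \bld{\theta} \;=\; \sigma\,\Psi_T^{-1}\,M_T, \qquad M_T := \int_0^T \bld{X}(t)\,dW(t),
\end{equation*}
obtained by substituting $d\dX = \bld{\theta}^{\top}\bld{X}\,dt + \sigma\,dW$ into \eqref{MLE-main}. For each of the nine cases I would pick a deterministic rate matrix $D_T$ (possibly after first rotating into the eigenbasis of $\Theta$) and write
\begin{equation*}
D_T\bigl(\widehat{\bld{\theta}}_T - \bld{\theta}\bigr) \;=\; \sigma\,\bigl(D_T^{-1}\Psi_T D_T^{-1}\bigr)^{-1}\bigl(D_T^{-1}M_T\bigr),
\end{equation*}
reducing the problem to joint convergence of the two factors on the right; Slutsky and the continuous mapping theorem then deliver the stated limit.

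\textbf{Ergodic case (I).} The process $\bld{X}$ is asymptotically stationary, so the ergodic theorem gives $\Psi_T/T \to \Sigma_{\infty}$ almost surely, where $\Sigma_{\infty}$ solves the Lyapunov equation $\Theta\Sigma_{\infty} + \Sigma_{\infty}\Theta^{\top} + \bld{\sigma}\bld{\sigma}^{\top} = 0$. The martingale $M_T$ has quadratic variation $\Psi_T$, so the two-dimensional martingale CLT yields $M_T/\sqrt{T} \Rightarrow \mathcal{N}(0,\sigma^{2}\Sigma_{\infty})$. Multiplying by $\sigma\Sigma_{\infty}^{-1}$ and computing $\Sigma_{\infty}^{-1}$ directly from the Lyapunov equation produces the variances $2|\theta_{1}|$ and $2|\theta_{2}|\theta_{1}^{2}$ appearing in \eqref{erg1}, as well as the independence of the two coordinates.

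\textbf{Non-ergodic cases (II--IV).} Using variation of parameters in terms of $p,q$, I would decompose $\bld{X}(t)$ into its exponentially growing/decaying modes plus a driving stochastic convolution. For the exponentially explosive subcases II(b), III(a), IV(b), the basic fact is that $e^{-pt}X(t)$ converges a.s.\ to a Gaussian $\zeta_{p}$ (with an extra factor of $t^{-1}$ when $p=q$); in the eigenbasis of $\Theta$ this makes $\Psi_T$ asymptotically a rank-one matrix, which forces the limiting distribution of $D_T(\widehat{\bld{\theta}}_T - \bld{\theta})$ to concentrate on the line $\Delta\theta_{2} = -p\,\Delta\theta_{1}$ and thus $\ell_T$ to be DLAMN. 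The normalized quantity $D_T^{-1}M_T$ contributes an \emph{independent} Gaussian $\eta$ coming from an It\^o integral against $e^{pt}$, so the ratio produces the Cauchy-type limits $\eta/(\xi+c)$ of \eqref{c4.1} and \eqref{c5.1}. For the subcases II(c,d), III(b), IV(a), where no exponential rescaling is available, I would use a diffusion scaling $(T^{-\alpha}X(Tu), T^{-\beta}\dX(Tu))$ and invoke Donsker-type invariance. In III(b), for instance, $X(t) = X(0) + \dX(0)t + \sigma\int_{0}^{t}(t-s)\,dW(s)$, so $(T^{-1/2}\dX(Tu), T^{-3/2}X(Tu))$ converges weakly to $(\sigma w(u), \sigma\int_{0}^{u} w(r)\,dr)$; the normalized entries of $\Psi_T$ then converge to $\mfz_{1},\mfz_{2},\mfz_{3}$, and It\^o's formula applied to $M_T$ yields $w^{2}(1)-1$ and $w(1)\mfz_{1}-\mfz_{2}$. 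Case IV(a) is analogous but produces two independent Brownian motions from the rotated coordinates $\cos(\nu t)X \pm \nu^{-1}\sin(\nu t)\dX$.

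\textbf{Main obstacles.} The most delicate subcase is II(a), where $\Psi_T$ has two vastly different scales ($e^{2pT}$ along the unstable eigendirection and $T$ along the stable one): careful block inversion in the eigenbasis is required to verify that the dominant exponential mode does \emph{not} contaminate the $\sqrt{T}$-rate asymptotics coming from the slow mode, producing Gaussian limits for $\widehat{\bld{\theta}}_T-\bld{\theta}$ while still giving DLAMN rather than LAN for $\ell_T$. In III(a), the Jordan block produces a polynomial factor $X(t)\sim (a+bt)e^{qt}$, which shifts the rate from $e^{qT}$ to $e^{qT}/T$ and forces an integration by parts to identify the Gaussian limit of $(Te^{qT})^{-1}X(T)$. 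Finally, in IV(b) the oscillating phase $\nu T\bmod 2\pi$ does not stabilize as $T\to\infty$, so convergence in distribution fails; relative compactness must instead be established by tightness, and the common ratio form of all subsequential weak limits identified from the rotated-coordinate Gaussian structure.
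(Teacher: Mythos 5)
Your overall route is the same as the paper's: start from $\widehat{\bld{\theta}}_T-\bld{\theta}=\sigma\Psi_T^{-1}\int_0^T\bld{X}(t)\,dW(t)$, pass to the eigenbasis of $\Theta$ (the modes $V_p,V_q$), and handle the nine cases by combining almost-sure limits of the explosive modes, a martingale CLT for the It\^o integrals, self-similarity of $W$ in the polynomially growing cases, integration by parts for the Jordan block, and tightness in the complex-root case. The paper does exactly this, except that it writes the inversion of $\Psi_T$ out explicitly as the ratios $N(T;\cdot,\cdot)/D(T;\cdot,\cdot)$ and exploits the bilinear identities \eqref{Det-prop}--\eqref{N-prop} to transfer everything to the modes.

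One substantive caveat: the reduction in your Plan paragraph --- normalize $\Psi_T$ and $M_T$ separately and apply Slutsky --- cannot be used verbatim whenever a root is positive, because then $D_T^{-1}\Psi_T D_T^{-1}$ converges to a \emph{singular} matrix for every admissible $D_T$ (you note the rank-one degeneration yourself), so its inverse has no limit and Slutsky gives nothing. What actually works, and what the $N/D$ formalism is built to capture, is that in the Cramer's-rule expansion the leading $e^{2pT}\zeta_p^2$ contributions cancel identically between the numerators and the determinant, leaving the subdominant mode in charge; this exact cancellation, not block inversion of a convergent limit, is why the rates in II(a), II(b), II(d) and III(a) are governed by $q$ rather than $p$. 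You flag the issue only for II(a), but it is equally the crux of the other explosive cases. Finally, a bookkeeping slip in the ergodic case: $\langle M\rangle_T=\Psi_T$, so $M_T/\sqrt{T}\Rightarrow\mathcal{N}(0,\Sigma_\infty)$ without the extra $\sigma^2$; with that correction the limit covariance is $\sigma^2\Sigma_\infty^{-1}=\mathrm{diag}\big(2|\theta_1||\theta_2|,\,2|\theta_1|\big)$, which matches \eqref{erg1}.
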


One general conclusion of Theorem \ref{th2} is that, if $p>0$ and $q<p$,
then it is the value of the
smaller root  $q$ that determines asymptotic behavior of the estimators.
This result comes as a surprise: the asymptotic behavior of both
estimators is dictated by the non-dominant mode, even though this
mode is ``invisible'' with probability one. Indeed,
the solution of \eqref{eq00} is a Gaussian process
\begin{equation}
\label{intr-meq}
X(t)=X(0)x_1(t)+\dX(0)x_2(t)+\sigma\int_0^t x_2(t-s)dW(s),
\end{equation}
 where the functions $x_1(t), x_2(t)$ form the fundamental system of solutions for
 the equation
 \begin{equation}
 \label{DE-main}
 \ddot{x}(t)-\theta_1 \dot{x}(t)-\theta_2 x(t)=0.
 \end{equation}
 In other words, $x_1(0)=1, \dot{x}_1(0)=0$, $x_2(0)=0$,
 $\dot{x}_2(0)=1$, and both $x_1=x_1(t)$ and $x_2=x_2(t)$ satisfy
 \eqref{DE-main}.
 The roots of the characteristic equation \eqref{CharEq} are
 \begin{equation}
 \label{CharRoots}
 p=\frac{\theta_1+\sqrt{\theta_1^2+4\theta_2}}{2},\ \
q=\frac{\theta_1-\sqrt{\theta_1^2+4\theta_2}}{2}.
 \end{equation}
  Then, with the usual modifications for complex, $p,q$,
 \begin{equation}
 \label{FS1}
 x_1(t)=
 \begin{cases}
 \displaystyle \frac{qe^{pt}-pe^{qt}}{q-p},& {\rm if}\  p>q,\\
 (1-qt)e^{qt},& {\rm if } \ p=q;
 \end{cases}
 \quad x_2(t)=
 \begin{cases}
 \displaystyle \frac{e^{pt}-e^{qt}}{p-q},& {\rm if}\  p>q,\\
 te^{qt},& {\rm if } \ p=q.
 \end{cases}
 \end{equation}
 When the roots   \eqref{CharRoots}
are real and distinct, equation  \eqref{DE-main} has two Lyapunov exponents,
$p$ and $q$, and it follows from \eqref{intr-meq} that if $q<p$, then
$$
\mathbb{P}\left(\lim_{t\to \infty} \frac{1}{t} \ln |X(t)| = q\right)=
\mathbb{P}\left(\lim_{t\to \infty} \frac{1}{t} \ln |\dX(t)| = q\right)=0
$$
for all initial conditions $X(0),\  \dX(0)$.  Thus, Theorem \ref{th2} shows that
if the larger Lyapunov exponent of \eqref{DE-main} is positive, then the
asymptotic behavior of the estimators is determined by the
smaller Lyapunov exponent.

As another illustration of the effects of the exponentially unstable mode,
note that if $\theta_2=0$, then  \eqref{eq00} becomes
\bel{c1.3}
d\dX=\theta_1\dX+\sigma \dot{W}.
\ee
In other words,  $\dX$ is a CAR(1) process.
If $\theta_1<0$, then asymptotic behavior of $\wth_{1,T}$
 and $\wth_{2,T}$  and similar to the CAR(1) situation in
 ergodic and neutrally stable cases, respectively. If
 $\theta_2\geq 0$, then there is no clear similarity
 with CAR(1).

 When $p>0$, the normalized limits of
$\wth_{1,T}-\theta_1$ and $\wth_{2,T}-\theta_2$
are negative multiples of each other.  Some correlation is also
present when $p=\lambda+\sqrt{-1}\nu$, $\lambda\geq 0, \nu>0$.
This type of correlation in  non-ergodic multi-parameter
models has been observed before;
see, for example, \cite[Section 4.1]{Luschgy-LAMN}. Still, as the
case $q<0=p$ shows, lack of ergodicity does not necessarily imply
correlation of the limits.

 One can verify qualitative consistency of the results of Theorem \ref{th2} by
 considering various limiting regimes for $p$ and $q$. For example,
 passing to limit $p\searrow q$ in \eqref{c4.1} suggests that
 the rate in the case of the positive double root should be slower than exponential.

\vskip 0.1in

Next, we study the possibility of NLRR, that is,  existence of a random matrix $R=R(T)$ such that
$R(T)(\widehat{\bld{\theta}}_T-\bld{\theta})$ converges in distribution to a bivariate normal
vector.

\begin{theorem}[Normal Limit with a Random Rate (NLRR)]
\label{th3}
Denote by $p$ and $q$ the roots of equation \eqref{CharEq}.
Normal limit with a random rate is possible in the following
six cases: ergodic; $p>q>0$; $p=q>0$; $q<0<p$;
$q<0=p$ (for $\wth_{1,T}$ only); $p=\lambda+\sqrt{-1}\nu$,
$\lambda>0, \ \nu>0$.

 {\sc Ergodic case.}  Assume that $\theta_1<0$, $\theta_2<0$, and let $\eta_1, \eta_2$ be  iid standard normal random variables. Then
\begin{equation}
\label{erg-nlrr}
\begin{split}
&\lim_{T\to \infty} \left(\int_0^T \dX^2(t)dt\right)^{1/2}
(\wth_{1,T}-\theta_1) \wc \sigma \,  \eta_1,\\
&\lim_{T\to \infty}\left(\int_0^T X^2(t)dt\right)^{1/2}(\wth_{2,T}-\theta_2) \wc \sigma\, \eta_2.
\end{split}
\end{equation}

For the rest of the theorem, denote by $\eta$ a standard normal random variable.

\vskip 0.1in

{\sc District Positive Roots.} Assume that $p>q>0$ and define
\begin{equation}
\label{rp}
r(T)= \left(\int_0^T\big(\dX(t)-pX(t)\big)^2dt\right)^{1/2}.
\end{equation}
Then
\begin{equation}
\label{c4.1-nlrr}
\lim_{T\to \infty}r(T)(\wth_{1,T}-\theta_1)\wc-\frac{1}{p} \lim_{T\to \infty} r(T)(\wth_{2,T}-\theta_2)\wc
\frac{p+q}{p-q}\sigma\,\eta.
\end{equation}

{\sc  Positive Double Root.}
Assume that $p=q>0$ and  define
$$
r(T)= \frac{1}{T^2}\left(\int_0^TX^2(t)dt\right)^{1/2}.
$$
Then
\begin{equation}
\label{c4.2-nlrr}
\lim_{T\to \infty}r(T)(\wth_{1,T}-\theta_1)\wc
-\frac{1}{p} \lim_{T\to \infty} r(T)(\wth_{2,T}-\theta_2)\wc
2\sqrt{2} p \sigma\, \eta.
\end{equation}

{\sc Roots of opposite sign.}
Assume that $q<0<p$ and let $r(T)$ be as in \eqref{rp}.
Then
\begin{equation}
\label{c4.3-nlrr}
\lim_{T\to \infty}r(T)(\wth_{1,T}-\theta_1)\wc
-\frac{1}{p} \lim_{T\to \infty} r(T)(\wth_{2,T}-\theta_2)\wc
\sigma\, \eta.
\end{equation}

{\sc  Zero root.}
Assume that $q<0$, $p=0$, and define
$$
r(T)= T^{-3/2}\int_0^TX^2(t)dt.
$$
Then
\begin{equation}
\label{c4.4-nlrr}
\lim_{T\to \infty}r(T)(\wth_{1,T}-\theta_1)\wc
\frac{\sigma^2}{\sqrt{2}|q|^{3/2}}\, \eta.
\end{equation}

{\sc Complex roots.} Assume that $p=\lambda+\sqrt{-1}\nu$,
$\lambda>0, \ \nu>0$ and define matrices
$$
A_T=
\left(
\begin{array}{lr}
\nu & 0 \\
\lambda & -1
\end{array}
\right)e^{-\lambda T},\
B(x,y)=
\frac{1}{x^2+y^2}\left(
\begin{array}{rr}
x & y \\
-y & x
\end{array}
\right).
$$
Then there exists a bivariate normal vector $(\mfu_s, \mfu_c)$ such that the
family
$$
B(\mfu_s,\mfu_c)A_T\Psi_T\,\big(\widehat{\bld{\theta}}_T-\bld{\theta}\big),\ T>0,
$$
is relatively compact, and all partial limits are bivariate normal random vectors
independent of $(\mfu_s, \mfu_c)$.
\end{theorem}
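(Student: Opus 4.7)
All six parts rest on the identity
\begin{equation*}
\widehat{\bld{\theta}}_T-\bld{\theta}=\sigma\,\Psi_T^{-1}\bld{M}_T,\qquad
\bld{M}_T=\int_0^T \bld{X}(t)\,dW(t),
\end{equation*}
which follows from \eqref{eqv} and \eqref{MLE-main}. The vector martingale $\bld{M}_T$ has matrix quadratic variation equal to $\Psi_T$, so whenever the proposed rate $R_T$ is of the form $C_T\Psi_T^{1/2}$ with $C_T$ tending to a non-degenerate limit, the Dambis--Dubins--Schwarz representation renders $R_T(\widehat{\bld{\theta}}_T-\bld{\theta})=\sigma\,C_T\,\Psi_T^{-1/2}\bld{M}_T$ a normalized continuous martingale to which a (stable) CLT applies. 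The two recurring tasks are the algebraic check that $R_T\approx C_T\Psi_T^{1/2}$ case by case and the verification that the resulting limiting bracket is deterministic, or at least independent of any persistent random phase.

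\textbf{Ergodic and real non-ergodic cases.} In the ergodic regime $\int_0^T X\dX\,dt=(X^2(T)-X^2(0))/2$ is bounded in probability, while $\int_0^T X^2\,dt$ and $\int_0^T\dX^2\,dt$ grow linearly in $T$ by ergodicity; hence $\Psi_T$ is asymptotically diagonal and \eqref{erg-nlrr} splits into two scalar statements $\int_0^T Z\,dW\big/\bigl(\int_0^T Z^2\,dt\bigr)^{1/2}\to\mathcal{N}(0,1)$ for $Z\in\{X,\dX\}$, each a direct DDS application, with independence of the two limits following from the vanishing cross bracket. For the non-ergodic real-root cases, I would pass to $Y=\dX-pX$, which by \eqref{eq000} is a scalar OU process with drift $q$. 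Writing $P=\bigl(\begin{smallmatrix}1&0\\p&1\end{smallmatrix}\bigr)$, one has $\Psi_T=P\Psi'_T P^{\top}$ with $\Psi'_T=\int_0^T(X,Y)^{\top}(X,Y)\,dt$, and the leading size of every entry of $\Psi'_T$ is read off from $Y(t)/e^{qt}\to Y_{\infty}$ (if $q>0$), the stationarity of $Y$ (if $q<0$), and the explicit solution of $\dX=pX+Y$. A direct computation then shows that $r(T)(\widehat{\bld{\theta}}_T-\bld{\theta})$ equals, to leading order, a fixed multiple of a DDS-normalized scalar martingale in the slow direction; the random prefactor $|Y_{\infty}|$ (or $|X_{\infty}|$ in case (e)) built into $r(T)$ cancels the same factor in the bracket of that martingale, leaving a deterministic effective quadratic variation and the Gaussian limit with the explicit constant stated in each of \eqref{c4.1-nlrr}--\eqref{c4.4-nlrr}.

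\textbf{Complex roots and the main obstacle.} For $p=\lambda+\sqrt{-1}\nu$, $\lambda>0$, let $H$ be a real similarity conjugating $\Theta$ to its real Jordan block $\lambda I+\nu J$ with $J=\bigl(\begin{smallmatrix}0&-1\\1&0\end{smallmatrix}\bigr)$. Solving the associated SDE for $\bld{Y}=H^{-1}\bld{X}$ and letting $T\to\infty$ yields $e^{-\lambda T}R(-\nu T)\bld{Y}(T)\to\bld{u}=(\mfu_s,\mfu_c)^{\top}$ a.s., where $R(\alpha)$ denotes planar rotation by $\alpha$ and
$\bld{u}=H^{-1}\bld{X}(0)+\int_0^{\infty}e^{-\lambda s}R(-\nu s)H^{-1}\bld{\sigma}\,dW(s)$
is Gaussian. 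The matrix $A_T$ is precisely this rescaled change of frame (rewritten in the $\bld{X}$ coordinates), and $B(\mfu_s,\mfu_c)$ is the inverse rotation undoing the random phase carried by $\bld{u}$; substitution into the master identity gives
\begin{equation*}
B(\mfu_s,\mfu_c)A_T\Psi_T\bigl(\widehat{\bld{\theta}}_T-\bld{\theta}\bigr)=\sigma\,B(\mfu_s,\mfu_c)A_T\bld{M}_T.
\end{equation*}
The hard part is to show that the right-hand side is asymptotically Gaussian and \emph{independent} of $(\mfu_s,\mfu_c)$. The plan is a stable-convergence argument: split $\bld{M}_T=\bld{M}_{T-\tau_T}+(\bld{M}_T-\bld{M}_{T-\tau_T})$ with $\tau_T\to\infty$, $\tau_T/T\to 0$; absorb the first summand into the pre-$T$ noise that defines $\bld{u}$, and apply Rebolledo's CLT to the second summand conditionally on $\mathcal{F}_{T-\tau_T}$, which already determines $\bld{u}$ up to $o(1)$, with the deterministic limiting bracket obtained after the $B(\mfu_s,\mfu_c)A_T$ rescaling. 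Relative compactness rather than convergence for the untwisted family $e^{\lambda T}(\widehat{\bld{\theta}}_T-\bld{\theta})$ is a byproduct: the factor $R(\nu T)$ persisting inside $\Psi_T$ has no limit as $T\to\infty$, and only the $\bld{u}$-dependent $B(\mfu_s,\mfu_c)$ can remove it.
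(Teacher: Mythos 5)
Your overall strategy --- read off the leading behavior of $\Psi_T$ and of $\bld{M}_T=\int_0^T\bld{X}(t)\,dW(t)$ and then choose the normalization that isolates a martingale divided by the square root of its own bracket --- is exactly the paper's, which simply re-uses the expansions of $D(T;\cdot,\cdot)$ and $N(T;\cdot,\cdot)$ from the proof of Theorem \ref{th2}. Your change of variables $Y=\dX-pX$ in the real-root cases is the paper's decomposition $X=V_p-V_q$, $\dX=pV_p-qV_q$ in different clothing, and the cancellation mechanism (the random amplitude in $r(T)$ cancels the same amplitude in the residual, leaving $\mathrm{sign}(\zeta_q)\,\eta\wc\eta$ by symmetry and independence) is the right one. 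Where you genuinely diverge is the complex case: the paper does not invoke a conditional or stable CLT at all; it uses Proposition \ref{ByParts} to reduce every stochastic integral to a product of the ``amplitude'' Gaussians $(\mfu_c,\mfu_s)$ with the ``recent-noise'' Gaussians $\mfh_c(T),\mfh_s(T)$ of \eqref{CR-6}, computes all covariances explicitly, and obtains independence of the partial limits from $(\mfu_c,\mfu_s)$ by a covariance computation as in \eqref{indep} rather than by conditioning. Your route is more abstract and would generalize, but it forfeits the one piece of structure the explicit computation hands you for free, and that is where the gap is.

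Concretely: you assert that the $B(\mfu_s,\mfu_c)A_T$ rescaling produces a \emph{deterministic limiting bracket}, and your closing sentence attributes the failure of convergence only to the untwisted family $e^{\lambda T}(\widehat{\bld{\theta}}_T-\bld{\theta})$. That is not what happens. The covariance matrix of $(\mfh_c(T),\mfh_s(T))$ --- equivalently, the bracket your conditional CLT would produce --- contains the persistent terms proportional to $\cos(2\nu T-\phi)$ and $\sin(2\nu T-\phi)$, which oscillate at frequency $2\nu$ and have no limit as $T\to\infty$. The matrix $B(\mfu_s,\mfu_c)$ removes the random phase carried by $(\mfu_s,\mfu_c)$ but cannot remove this deterministic oscillation in the second-order structure of the recent noise; that is precisely why the theorem claims only relative compactness, with Gaussian \emph{partial} limits, for the twisted family as well. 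Your argument is repairable --- run the conditional CLT along subsequences $T_n$ with $2\nu T_n$ convergent modulo $2\pi$ --- but as written it proves a stronger statement (convergence of the twisted family) that is false. A smaller caution: your opening claim that the rates are of the form $C_T\Psi_T^{1/2}$ is contradicted by the paper's own remark following Theorem \ref{th3} that $\Psi_T^{1/2}$ is usually \emph{not} the correct random normalization; in the non-ergodic cases the useful random rate lives in the slow eigendirection only, as your own treatment via $Y=\dX-pX$ then correctly reflects, so the ``master'' DDS reduction should be stated for the scalar martingale in that direction rather than for the full vector $\bld{M}_T$.
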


Theorem \ref{th3} suggests two negative conclusions:  (a) $\Psi_T^{1/2}$ is
usually not the correct random normalization (which is especially striking when
$p=\lambda+\sqrt{-1}\nu$); (b) existence of NLRR in non-ergodic CAR(2)
is not as helpful as in CAR(1),  because  the  rates and/or the limit
distributions  depend on the unknown parameters.

\vskip 0.1in

Finally, we describe the asymptotic structure of the normalized log-likelihood ratio \eqref{LAQ}.

\begin{theorem}[The structure of $\ell_{T}$]
\label{th4}
Denote by $p$ and $q$ the roots of equation \eqref{CharEq},
by ${\mathrm{diag}}(x,y)$ the diagonal matrix with $x$ and $y$ on
the main diagonal, and by $\bld{b}_p$ the column vector
$$
\bld{b}_p=
\left(
\begin{array}{rr}
1\\
p
\end{array}
\right).
$$

  If $\theta_1<0$, $\theta_2<0$ (ergodic case), and $A_T={\mathrm{diag}}(T^{-1/2}, T^{-1/2})$, then
 $\ell_{T}$ is LAN. \\

 If $p>0$, $q<p$, and $A_T=e^{-pT}\bld{b}_p\,\bld{b}_p^{\top}$, then
 $\ell_T$ is degenerate LAMN  and the matrix $B$ in \eqref{LAMN} is
  \begin{equation}
  \label{DLAMN-B}
  B=\frac{(1+p^2)^2}{2p}\sigma^2\zeta^2\,\bld{b}_p\,\bld{b}^{\top}_p,
  \end{equation}
   where
 $\zeta$ is a standard normal random variable. \\

 If $q<0=p$ and $A_T={\mathrm{diag}}(T^{-1}, T^{-1/2})$ then
 $\ell_T $ is mixed LABF/LAN:
 $$
 \ell_{\infty}(\bld{u})= \bld{u}^{\top}\bld{\xi}
  -\frac{1}{2}\bld{u}^{\top}B\bld{u},
  $$
  where
  $$
  \bld{\xi}=
  \left(
  \begin{array}{l}
  \sigma|q|^{-1}\int_0^1 w(s)dw(s)\\
 (2|q|)^{-1/2}\sigma\, \eta
  \end{array}
  \right),\
  B=
  \left(
  \begin{array}{cc}
  \sigma^2|q|^{-2}\int_0^1 w^2(s)ds & 0 \\
   0 & \sigma^2 (2|q|)^{-1}
   \end{array}
   \right),
   $$
 $\eta$ is a standard normal random variable, $w$ is a standard Brownian motion,
 and $\eta$ and $w$ are independent. \\

 If $p=q>0$ and
  $A_T=T^{-1}e^{-pT}\bld{b}_p\,\bld{b}_p^{\top}$, then
 $\ell_T$  is degenerate LAMN  and the matrix $B$ in \eqref{LAMN}
 is given by \eqref{DLAMN-B}. \\

 If $p=q=0$ and $A_T={\mathrm{diag}}(T^{-2}, T^{-1})$, then $\ell_T$
 is LABF and the matrix $G(t)$ in \eqref{LABF} is
 $$
G(t)=
\left(
\begin{array}{rl}
\sigma\int_0^tw(s)ds & 0\\
\sigma w(t) & 0
\end{array}
\right),
$$
where $w$ is a standard Brownian motions.  \\

If $p=\sqrt{-1}\nu$, $\nu>0$, and $A_T={\mathrm{diag}}(T^{-1}, T^{-1})$,
 then $\ell_T$  is  LABF  and the matrix $G(t)$ in \eqref{LABF} is
$$
G(t)=
\left(
\begin{array}{rl}
\sigma w_1(t) & \sigma w_2(t)\\
-\sigma w_2(t) & \sigma w_1(t)
\end{array}
\right),
$$
where $w_1$ and $w_2$ are independent standard Brownian motions.  \\

If $p=\lambda+\sqrt{-1}\nu$, $\lambda, \nu>0$, and
$$
A_T=
\left(
\begin{array}{lr}
\nu & 0 \\
\lambda & -1
\end{array}
\right) e^{-\lambda T},
$$
then $\{\ell_T,\ T>0\}$
is a relatively compact LAMN family having all partial limits  of the form \eqref{LAMN}.

\end{theorem}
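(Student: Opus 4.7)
The plan is to diagonalize $\Theta$ over $\mathbb{C}$ and reduce the analysis to a one-dimensional complex Ornstein--Uhlenbeck process. With $P=\left(\begin{smallmatrix}\nu & 0 \\ \lambda & -1\end{smallmatrix}\right)$, so that $A_T=e^{-\lambda T}P$, introduce $\bld{Y}(t)=P\bld{X}(t)$ and the complex process $Z(t)=Y_1(t)+\sqrt{-1}\,Y_2(t)$. Using $\theta_1=2\lambda$ and $\theta_2=-(\lambda^2+\nu^2)$, a direct check gives $P\Theta P^{-1}=\left(\begin{smallmatrix}\lambda & -\nu \\ \nu & \lambda\end{smallmatrix}\right)$, and $\bld{Y}$ solves a linear SDE whose complex form is
\begin{equation*}
dZ(t)=p\,Z(t)\,dt-\sqrt{-1}\,\sigma\,dW(t),\qquad p=\lambda+\sqrt{-1}\,\nu,
\end{equation*}
so that $Z(t)=e^{pt}\bigl[Z(0)-\sqrt{-1}\,\sigma\int_0^t e^{-ps}dW(s)\bigr]$. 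Since $\Re(p)=\lambda>0$, the bracketed martingale converges a.s.\ and in $L^2$ to the complex Gaussian limit $Z_\infty:=Z(0)-\sqrt{-1}\,\sigma\int_0^\infty e^{-ps}\,dW(s)$, and hence $e^{-pt}Z(t)\to Z_\infty$ a.s.

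I next show that $A_T^{\top}\Psi_T A_T$ is controlled by two scalar observables. All of its entries are deterministic linear combinations of $\mathcal{J}_T:=e^{-2\lambda T}\int_0^T|Z(t)|^2\,dt$ and the real and imaginary parts of $\mathcal{K}_T:=e^{-2\lambda T}\int_0^T Z(t)^2\,dt$, because the entries of $\int_0^T \bld{Y}\bld{Y}^{\top}dt$ reduce to $\int_0^T|Z|^2dt$ and $\int_0^T Z^2 dt$ via $Y_1^2+Y_2^2=|Z|^2$, $Y_1^2-Y_2^2=\Re(Z^2)$, $2Y_1Y_2=\Im(Z^2)$, while the passage from $\bld{Y}$ back to $P^{\top}\bld{X}$ amounts to multiplication by a fixed matrix. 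Substituting $u=T-t$ and $Z(t)=e^{pt}[Z_\infty+r(t)]$ with $r(t)\to 0$ a.s., dominated convergence (with dominator $Ce^{-2\lambda u}$) yields
\begin{equation*}
\mathcal{J}_T\longrightarrow\frac{|Z_\infty|^2}{2\lambda},\qquad \mathcal{K}_T-\frac{e^{2\sqrt{-1}\,\nu T}}{2p}\,Z_\infty^2\longrightarrow 0 \quad \text{a.s.}
\end{equation*}
Consequently $A_T^{\top}\Psi_T A_T=B(e^{2\sqrt{-1}\,\nu T},Z_\infty)+\op(1)$, where $B(\cdot,\cdot)$ is continuous and symmetric positive semidefinite, uniformly bounded for bounded $|Z_\infty|$.

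For the score term, $A_T^{\top}\int_0^T\bld{X}(t)\,dW(t)$ is, modulo an $\op(1)$ remainder, a linear combination of the rescaled It\^o integrals $\zeta^c_T:=e^{-\lambda T}\int_0^T e^{\lambda t}\cos(\nu t)\,dW(t)$ and $\zeta^s_T:=e^{-\lambda T}\int_0^T e^{\lambda t}\sin(\nu t)\,dW(t)$, with coefficients depending continuously on $e^{2\sqrt{-1}\,\nu T}$ and on $Z_\infty$; the remainder is controlled via the It\^o isometry and the a.s.\ decay of $r$. The variances and cross-covariance of $(\zeta^c_T,\zeta^s_T)$ are bounded and depend continuously on $e^{2\sqrt{-1}\,\nu T}$ only, while the key observation is that
\begin{equation*}
\Cov\bigl(\zeta^c_T,\int_0^T e^{-\lambda s}\cos(\nu s)\,dW(s)\bigr)=e^{-\lambda T}\int_0^T\cos^2(\nu t)\,dt=O(Te^{-\lambda T})\to 0,
\end{equation*}
and analogous estimates hold for the other three cross-covariances with the integrals determining $Z_\infty$. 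Joint Gaussianity then yields asymptotic independence of $(\zeta^c_T,\zeta^s_T)$ from $Z_\infty$.

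Finally, because the unit circle is compact, every sequence $T_n\to\infty$ admits a subsequence along which $e^{2\sqrt{-1}\,\nu T_{n_k}}\to e^{\sqrt{-1}\,\phi}$ for some $\phi\in[0,2\pi)$. Along this subsequence, $A_{T_{n_k}}^{\top}\Psi_{T_{n_k}}A_{T_{n_k}}\to B:=B(e^{\sqrt{-1}\,\phi},Z_\infty)$ a.s., and the score term converges jointly in distribution to $B^{1/2}\bld{\eta}$ with $\bld{\eta}$ a standard bivariate normal independent of $B$; inserting these into \eqref{LAQ} gives the limit exactly in the LAMN form \eqref{LAMN}, proving relative compactness of $\{\ell_T\}$ and identifying all partial limits. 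The main obstacle is the independence step of the previous paragraph: $Z_\infty$ is a functional of $e^{-\lambda s}$-weighted noise supported on all of $[0,\infty)$, while the limiting $\bld{\eta}$ arises from $e^{\lambda(t-T)}$-weighted noise concentrated near the endpoint $T$, and it is precisely the $Te^{-\lambda T}$ decay of the cross-covariances between these two weight regimes that decouples them asymptotically.
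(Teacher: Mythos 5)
Your proposal addresses only the final case of the theorem, $p=\lambda+\sqrt{-1}\,\nu$ with $\lambda,\nu>0$. The statement contains six further assertions --- LAN in the ergodic case, degenerate LAMN with the explicit rank-one matrix \eqref{DLAMN-B} when $p>0,\ q<p$ and again when $p=q>0$, the mixed LABF/LAN structure when $q<0=p$, and the two LABF cases $p=q=0$ and $p=\sqrt{-1}\,\nu$ --- and none of them is touched. These are not corollaries of the complex-root analysis: the DLAMN cases rest on the asymptotics $\Psi_T=\frac{\zeta_p^2}{2p}e^{2pT}\big(\bld{b}_p\bld{b}_p^{\top}+\oas(T)\big)$ and $\int_0^T\bld{X}(t)dW(t)=e^{pT}\zeta_p\big(\eta_p(T)+\oas(T)\big)\bld{b}_p$ (with extra factors of $T$ for the double root), together with the identity $\big(\bld{b}_p\bld{b}_p^{\top}\big)^2=(1+p^2)\bld{b}_p\bld{b}_p^{\top}$ that produces the coefficient in \eqref{DLAMN-B}; the LABF cases require functional convergence of rescaled stochastic integrals via self-similarity and Proposition \ref{MLT}. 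As written, the proposal establishes roughly one claim out of seven, so the proof of the theorem is incomplete in an essential way.

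Within the case you do treat, the argument is sound and is essentially the paper's computation in complex clothing: the diagonalization $Z=Y_1+\sqrt{-1}\,Y_2$ with $dZ=pZ\,dt-\sqrt{-1}\,\sigma\,dW$ repackages the real quantities $V_c,V_s,\mfu_c,\mfu_s,\mfh_c(T),\mfh_s(T)$ of \eqref{CR-3}--\eqref{CR-6}, and your three key points --- almost sure convergence of $e^{-pt}Z(t)$, the oscillation of $\mathcal{K}_T$ through the phase $e^{2\sqrt{-1}\,\nu T}$ forcing passage to subsequences, and the $O(Te^{-\lambda T})$ cross-covariances decoupling the endpoint-concentrated score from $Z_\infty$ --- match \eqref{indep} and \eqref{CR-main}. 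Two details should still be supplied. First, \eqref{LAMN} requires $B$ positive definite, not merely semidefinite; this follows from $\det\big(A_T^{\top}\Psi_TA_T\big)=\nu^2e^{-4\lambda T}D(T;X,\dX)\to c\,(\mfu_c^2+\mfu_s^2)^2>0$ a.s.\ by \eqref{CR-5}, but you only claim semidefiniteness. Second, the identification of the limiting conditional covariance of the score vector $A_T^{\top}\int_0^T\bld{X}\,dW$ with the matrix $B$ itself (so that the partial limit is exactly $\frac{1}{\sigma}\bld{u}^{\top}B^{1/2}\bld{\eta}-\frac{1}{2\sigma^2}\bld{u}^{\top}B\bld{u}$ and not merely some mixed normal) is asserted rather than computed; it is a short Gaussian covariance calculation, but it is the step that makes the limit a LAMN form rather than a generic quadratic.
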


One general conclusion of Theorem \ref{th4} is that, if $p>0$ and $q<p$,
then it is the value of the
larger root  $p$ that determines asymptotic behavior of the normalized log-likelihood ratio, which is
 in sharp contrast with Theorem \ref{th2}.

\section{Preparation for the proofs}
\label{Sec4}
To study asymptotic behavior of $\wth_{1,T}$
and $\wth_{2,T}$, we need the
expressions for the residuals $\wth_{i,T}-\theta_i$, $i=1,2$:
\begin{equation}
\label{residual}
\begin{split}
\widehat{\theta}_{1,T}-\theta_1&\!=\!
\frac{\left(\int_0^T{X}^2(t)dt\right)\left(\int_0^T\dot{X}(t)\sigma dW(t)\right)
-\left(\int_0^TX(t)\dot{X}(t)dt\right)\left(\int_0^T{X}(t)\sigma dW(t)\right)}
{\left(\int_0^T\dot{X}^2(t)dt\right)\left(\int_0^T{X}^2(t)dt\right)-
\left(\int_0^T{X}(t)\dot{X}(t)dt\right)^2},\\
\widehat{\theta}_{2,T}-\theta_2&\!=\!
\frac{\left(\int_0^T\dot{X}^2(t)dt\right)\left(\int_0^TX(t)\sigma dW(t)\right)-
\left(\int_0^TX(t)\dot{X}(t)dt\right)\left(\int_0^T\dot{X}(t)\sigma dW(t)\right)}
{\left(\int_0^T\dot{X}^2(t)dt\right)\left(\int_0^T{X}^2(t)dt\right)-
\left(\int_0^T{X}(t)\dot{X}(t)dt\right)^2}.
\end{split}
\end{equation}
These equalities follow directly from \eqref{MLE-main} and \eqref{eq00}.

Equation \eqref{eq00}  has a closed-form solution \eqref{intr-meq}, meaning that
  \eqref{residual} can be written in terms of integrals
of the type $\int_0^t f(s)dW(s)$. Unfortunately, this direct approach
quickly leads to intractable expressions. Therefore, despite availability
  of explicit formulas, a more sophisticated approach to the analysis of \eqref{residual}
  is necessary. In the ergodic case, the ergodic theorem provides all the necessary tools,
  and when $\theta_1=\theta_2=0$, the expressions are simplified using self-similarity
  of the standard Brownian motion. Other cases benefit from the following construction.

Given two square-integrable on $[0,T]$  functions $f,g$, define
\begin{align}
\label{Num}
N(T;f,g)&=\left(\int_0^T{f}^2(t)dt\right)\left(\int_0^Tg(t)\sigma dW(t)\right)\\
\notag
&-\left(\int_0^Tf(t)g(t)dt\right)\left(\int_0^Tf(t)\sigma dW(t)\right),\\
\label{Det}
D(T;f,g)&=\left(\int_0^T f^2(t)dt\right) \left(\int_0^T g^2(t)dt\right)
-\left(\int_0^T f(t)g(t)dt\right)^2.
\end{align}
Clearly, $D(T;f,g)=D(T;g,f)$, but in general
 $N(T;f,g)\not= N(T;g,f)$.
Then formulas \eqref{residual} become
\begin{equation}
 \label{residual1}
 \wth_{1,T}-\theta_1=\frac{N(T;X,\dX)}{D(T;X,\dX)},\ \
 \wth_{2,T}-\theta_2=\frac{N(T;\dX,X)}{D(T;X,\dX)}.
 \end{equation}

 The general idea of the proof of Theorem \ref{th2} is
to find the asymptotic behavior of $D(T;X,\dX)$, $N(T;X,\dX)$, and
$N(T;\dX,X)$, as $T\to \infty$. To keep track of the results, note that,
according to Table 3, $D(T;X,\dX)$ is
 dimensionless, $N(T;X,\dX)$ is measured in $[t]^{-1}$, and
 $N(T;\dX,X)$ is measured in $[t]^{-2}$.

For every real numbers $\alpha, \beta, \gamma, \kappa$
 and every square-integrable functions
$f,g$, we have the following identities:
\begin{align}
\label{Det-prop}
 &D(T; \alpha f+\beta g, \gamma f+\kappa g)=
  (\alpha \kappa-\beta\gamma)^2D(T; f,g),\\
 \label{N-prop}
&N(T; \alpha f+\beta g, \gamma f+\kappa g)
  = (\alpha^2\kappa-\alpha\beta\gamma)N(T;f,g)
 + (\beta^2\gamma-\alpha\beta\kappa)N(T;g,f).
\end{align}

Recall that  $\oas(t)$ denotes a continuous random process converging to
zero with probability one as $t\to \infty$. We will often use the
following result: if $f(t)>0$ is a continuous
process,  $F(T)=\int_0^T f(s)ds$, and $\lim_{T\to \infty} F(T)=\infty$ with
probability one, then
\begin{equation}
\label{TL}
\frac{1}{F(T)}\int_0^T f(t)\,\oas(t)dt=\oas(T).
\end{equation}
Indeed, \eqref{TL}  is immediate if the integral $\int_0^{\infty}  f(t)\,\oas(t)dt$
converges; otherwise, \eqref{TL} follows after one application of L'Hospital's rule.

Here are some other technical results to be used later.
For $r>0$, define Gaussian random variables
\bel{xi-eta}
\xi_r=\int_0^{\infty} e^{-rs} dW(s),\ \
\eta_r(T)=\int_0^Te^{-r(T-s)}dW(s).
\end{equation}
We have $\bE\eta_r(T)=0$, $\bE\eta_r^2(T)=(1-e^{-2rT})/(2r)$, and
therefore
\begin{equation}
\label{xi-eta1}
\lim_{T\to \infty} \eta_r(T)\wc \eta_r,
\end{equation}
where the random variable
 $\eta_r$ is normal  with mean zero and
variance $1/(2r)$. Since
\bel{indep}
\bE \xi_q\eta_r(T)= e^{-rT}\int_0^Te^{(r-q)s}ds \to 0, \ T\to \infty,
\ee
it follows that
$\eta_r$ and $\xi_q$ are independent for every $q,r>0$.

A more sophisticated version of the above observations is the
following result.
\begin{proposition}
\label{MLT}
Let $M=(M_1(t),\ldots,M_d(t)),\ 0\leq t\leq  1,$ be a $d$-dimensional continuous
Gaussian martingale with $M(0)=0$, and let
$M_T=(M_{T,1}(t),\ldots,M_{T,d}(t))$, $T\geq 0,\ 0\leq t\leq 1$,
 be a family of continuous
square-integrable $d$-dimensional martingales such that
$M_T(0)=0$ for all $T$  and, for every $t\in [0,1]$ and $i,j=1,\ldots,d$,
$$
\lim_{T\to \infty} \langle M_{T,i}, M_{T,j}\rangle (t)=\langle M_{i}, M_{j}\rangle (t)
$$
in probability. Then $\lim_{T\to \infty} M_T\wcL M$
in the topology of continuous functions on $[0,1]$.
\end{proposition}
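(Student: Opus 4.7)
The plan is to carry this out as a standard functional central limit theorem for continuous martingales. First, I would exploit that the limit $M$, being a continuous Gaussian martingale with $M(0)=0$, must have deterministic quadratic covariation: writing $C_{ij}(t)=\bE[M_i(t)M_j(t)]$, the process $M_iM_j-\langle M_i,M_j\rangle$ is a martingale vanishing at $0$, so $\bE\langle M_i,M_j\rangle(t)=C_{ij}(t)$, and Gaussianity of every linear combination $\lambda^\top M(t)$ combined with Dambis--Dubins--Schwarz forces $\langle M_i,M_j\rangle(t)$ to coincide with its deterministic expectation $C_{ij}(t)$; continuity of $C$ is inherited from that of $M$. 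The hypothesis therefore reduces to $\langle M_{T,i},M_{T,j}\rangle(t)\to C_{ij}(t)$ in probability for every $t\in[0,1]$, with a deterministic continuous limit $C$.

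Second, I would prove tightness of $\{M_T\}$ in $C([0,1],\R^d)$. By the Cram\'er--Wold device it suffices to handle each scalar coordinate $M_{T,i}$. The pointwise-in-$t$ convergence in probability of the monotone nondecreasing processes $\langle M_{T,i}\rangle$ to the continuous limit $C_{ii}$ upgrades, by a Dini-type argument, to uniform convergence on $[0,1]$ in probability. Combining this with the Burkholder--Davis--Gundy inequality applied to $M_{T,i}(t)-M_{T,i}(s)$ yields a uniform-in-$T$ estimate on the modulus of continuity, and Kolmogorov's criterion delivers tightness.

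Third, I would identify every subsequential limit. Fix a subsequence along which $M_T\to M_\infty$ in distribution in $C([0,1],\R^d)$. For $\lambda\in\R^d$, the scalar process $N^\lambda_T(t)=\lambda^\top M_T(t)$ is a continuous local martingale with quadratic variation $\lambda^\top\langle M_T\rangle(t)\lambda\to\lambda^\top C(t)\lambda$, and by It\^{o}'s formula
$$Z^\lambda_T(t)=\exp\!\Bigl(i\,N^\lambda_T(t)+\tfrac{1}{2}\lambda^\top\langle M_T\rangle(t)\lambda\Bigr)$$
is a complex local martingale. After localizing at the first time $\langle M_T\rangle$ exceeds a large level and passing to the limit in the identity $\bE[Z^\lambda_T(t)\mathbf{1}_A]=\bE[Z^\lambda_T(s)\mathbf{1}_A]$ for $A\in\mathcal{F}_s^{M_T}$, I would obtain
$$\bE\!\Bigl[e^{i\lambda^\top(M_\infty(t)-M_\infty(s))}\,\Big|\,\mathcal{F}_s^{M_\infty}\Bigr]=\exp\!\Bigl(-\tfrac{1}{2}\lambda^\top(C(t)-C(s))\lambda\Bigr),$$
which characterises $M_\infty$ as a continuous process with independent Gaussian increments of covariance $C(t)-C(s)$, hence equal in law to $M$. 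Since all subsequential limits coincide, $M_T\wcL M$ for the full family.

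The main obstacle is the uniform integrability required to pass to the limit in the exponential-martingale identity: $Z^\lambda_T$ is not bounded a priori because of the factor $\exp(\tfrac{1}{2}\lambda^\top\langle M_T\rangle(t)\lambda)$, so the localization level must be let to infinity in a way synchronised with $T\to\infty$, exploiting the deterministic nature of $C$ to ensure that the truncation error vanishes uniformly along the subsequence. Once this technical point is handled, the rest of the argument is the routine three-step pattern (quadratic variation $\Rightarrow$ tightness $\Rightarrow$ characterisation of limit).
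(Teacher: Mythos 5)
Your proposal is correct in substance, but it takes a very different route from the paper: the paper's entire proof is a one-line citation of Theorem VIII.3.11 in Jacod--Shiryaev (the functional central limit theorem for martingales), whereas you essentially reconstruct the proof of that theorem from scratch. Your three-step scheme --- (i) a continuous Gaussian martingale vanishing at $0$ has independent increments and hence deterministic continuous bracket $C$, (ii) convergence of the brackets to $C$ gives tightness in $C([0,1],\R^d)$, (iii) the exponential local martingale $\exp\bigl(i\lambda^{\top}M_T(t)+\tfrac12\lambda^{\top}\langle M_T\rangle(t)\lambda\bigr)$ identifies every subsequential limit as the Gaussian process with independent increments and covariance $C$ --- is exactly the standard argument behind the cited theorem, so the two approaches prove the same thing at different levels of self-containment: the citation is economical, while your version makes the mechanism visible and shows why only convergence of the brackets at each fixed $t$ is needed. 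One technical point in step (ii) deserves care: the Burkholder--Davis--Gundy bound controls moments of increments by moments of bracket increments, but the hypothesis gives only convergence in probability of the brackets, with no uniform integrability, so Kolmogorov's criterion cannot be applied directly. You must first localize (stop $M_T$ when $\langle M_T\rangle$ exceeds a fixed level, an event whose probability is controlled uniformly in $T$ precisely because the limiting bracket $C$ is deterministic and continuous), or invoke the standard fact that $C$-tightness of the brackets implies tightness of continuous local martingales started at $0$. This is the same localization you already flag as the main obstacle in step (iii), so the fix is available; with it, the argument closes.
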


\begin{proof} Modulo a non-essential (in this case) difference
between a sequence and a family indexed by the positive reals,
this is a particular case of Theorem VIII.3.11 in \cite{JSh}.
\end{proof}

For every $r>0$, the process $\eta_r(t), \ t\geq 0,$
is  ergodic (in fact, strictly mixing).   Therefore, the ergodic theorem
implies
\begin{equation}
\label{OU-LLN}
\frac{1}{T}\int_0^T \eta_r(t)dt=\oas(T),\ \
\frac{1}{T}\int_0^T \eta_r^2(t)dt =  \frac{1}{2r}+\oas(T),
\end{equation}
 and, together with Proposition \ref{MLT}
\begin{equation}
\label{OU-WC}
\begin{split}
&\lim_{T\to \infty} \frac{1}{\sqrt{T}}\int_0^T\eta_r(t)dW(t)\wc \eta_r^{\perp},
\\
&\lim_{T\to \infty} \left(\int_0^T \eta_r^2(t)dt\right)^{-1/2} \int_0^T\eta_r(t)dW(t)\wc \sqrt{2r}\ \eta_r^{\perp},
\end{split}
\end{equation}
 where $\eta_r^{\perp}$ is normal with mean zero and variance $1/(2r)$, and
 the random variables
$(\xi_q,\, \eta_r, \,\eta_r^{\perp})$ are jointly independent for every $q,r>0$.

Asymptotic analysis of certain stochastic integrals can benefit from the
law of iterated logarithm. Recall that if $f$ is locally square-integrable
adapted process, $M(t)=\int_0^t f(s)dW(s)$ and $\langle M\rangle(t)=\int_0^t f^2(s)ds\nearrow
+\infty$, $t\to \infty,$ with probability one, then there exists a
standard Brownian motion $\bar{W}$ such that
$$
M(t)=\bar{W}\big(\langle M\rangle(t)\big).
$$
The law of iterated
logarithm for $\bar{W}$ implies
\bel{LIL-gen}
\lim_{T\to \infty} \frac{M(T)}{\sqrt{\langle M\rangle(T)}\ (\ln \langle M\rangle(T))^{\varepsilon}}=\oas(T)
\ee
for every $\varepsilon >0$. For example, if $\varepsilon>0$, then
\bel{OU-LIL}
T^{-\varepsilon}\eta_r(T)=\oas(T).
\ee

To conclude the general discussion we establish an integration by parts formula.
As a motivation, recall that analysis of CAR(1) in the exponentially
unstable case leads to the function
 $V_p(t)=\int_0^t e^{p(t-s)}dW(s)$, $p>0$, for which
 integration by parts shows that
\bel{BP-00}
e^{-pT}\left(\int_0^T V_p(t)dW(t)-e^{pT}\xi_p\eta_p(T)\right)=\oas(T).
\ee
Since exponentially unstable
solutions of equation \eqref{DE-main} are of the form
$e^{pt}f(t)$, where the function $f$ grows at most polynomially,
we generalize \eqref{BP-00} as follows.

\begin{proposition}
\label{ByParts}
Given a deterministic (for simplicity) and locally square-integrable function
$f$, define $S_f(t)=\int_0^t f(s)dW(s)$.

Let functions $\varphi$ and $\psi$ be such that
\begin{equation}
\label{eq:BP1}
 \int_0^{+\infty} \psi^2(t)dt=+\infty,\ \ e^{pT} |\varphi(T)| +
e^{-pT} |\psi(T)|\leq C(1+T^r),\ T\geq 0,
\end{equation}
for some $p,C,r>0$.
Then
\begin{equation}
\label{eq:BP2}
e^{-qT}\left(\int_0^T S_{\varphi}(t)dS_{\psi}(t)-
S_{\varphi}(T)S_{\psi}(T)\right) = \oas(T) \ \ {\rm for \ all} \ \ q>0.
\end{equation}
\end{proposition}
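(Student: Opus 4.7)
My plan is to apply It\^o's product formula to $S_\varphi(t)S_\psi(t)$ and then estimate each of the resulting terms. Since $S_\varphi$ and $S_\psi$ are continuous square-integrable martingales with cross variation $\langle S_\varphi, S_\psi\rangle(t)=\int_0^t \varphi(s)\psi(s)\,ds$, integration by parts gives
\[
\int_0^T S_\varphi(t)\,dS_\psi(t) - S_\varphi(T)S_\psi(T) = -\int_0^T S_\psi(t)\varphi(t)\,dW(t) - \int_0^T \varphi(t)\psi(t)\,dt,
\]
so the proof reduces to showing that each right-hand term, multiplied by $e^{-qT}$, is $\oas(T)$ for every $q>0$.

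The deterministic integral is the easy one: hypothesis \eqref{eq:BP1} forces $|\varphi(t)\psi(t)| \leq C^2(1+t^r)^2$, so $\int_0^T\varphi(t)\psi(t)\,dt$ grows at most polynomially in $T$, and $e^{-qT}$ absorbs any polynomial.

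For the stochastic term I plan to apply \eqref{LIL-gen} twice. Since $\psi^2(t) \leq C^2(1+t^r)^2 e^{2pt}$, the quadratic variation $\langle S_\psi\rangle(T)$ is bounded by $e^{2pT}$ times a polynomial, and the assumption $\int_0^\infty\psi^2\,dt=+\infty$ guarantees $\langle S_\psi\rangle(T)\to\infty$, so \eqref{LIL-gen} yields $|S_\psi(t)| \leq e^{pt}P_1(t)\oas(t)$ for some polynomial $P_1$. Multiplying by the bound $|\varphi(t)| \leq C(1+t^r)e^{-pt}$ from \eqref{eq:BP1} makes the exponentials cancel, so the quadratic variation of the martingale $M(T):=\int_0^T S_\psi(t)\varphi(t)\,dW(t)$ is a polynomial in $T$ times $\oas(T)$, with \eqref{TL} handling the integration. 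A second application of \eqref{LIL-gen}, this time to $M$ itself, then bounds $|M(T)|$ by a polynomial in $T$ times $\oas(T)$, whence $e^{-qT}|M(T)|=\oas(T)$ for every $q>0$.

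The main technical obstacle will be the bookkeeping: one must track the polynomial and polylogarithmic factors produced by the nested LIL estimates and verify that the only genuinely exponential growth $e^{pT}$ contributed by $\psi$ is cancelled exactly by the $e^{-pT}$ decay of $\varphi$ built into \eqref{eq:BP1}. This cancellation, rather than any delicate property of $W$, is precisely what the coupled exponential bound in the hypothesis is designed to produce, and any residual polynomial-in-$T$ slack is then harmlessly absorbed by $e^{-qT}$ for arbitrarily small $q>0$.
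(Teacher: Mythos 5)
Your decomposition is exactly the paper's: It\^o's product rule splits the difference into the Lebesgue integral $\int_0^T\varphi\psi\,dt$ (killed by $e^{-qT}$ via the polynomial bound from \eqref{eq:BP1}) plus the martingale $M(T)=\int_0^T S_\psi(t)\varphi(t)\,dW(t)$, whose bracket is controlled by applying \eqref{LIL-gen} to $S_\psi$ so that the $e^{pt}$ growth of $S_\psi$ cancels against the $e^{-pt}$ decay of $\varphi$. The only divergence is the final step: you pass from $\langle M\rangle(T)$ to $M(T)$ by a second application of \eqref{LIL-gen}, whereas the paper uses the martingale strong law of large numbers (the a.s.\ existence of $\lim_T M(T)/(1+\langle M\rangle(T))$). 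Your variant has a small unaddressed hypothesis: \eqref{LIL-gen} requires $\langle M\rangle(T)\nearrow+\infty$ a.s., and nothing in \eqref{eq:BP1} guarantees $\int_0^\infty S_\psi^2(t)\varphi^2(t)\,dt=+\infty$ (e.g.\ $\varphi$ could vanish for large $t$). The gap is easily filled --- on the event $\{\langle M\rangle(\infty)<\infty\}$ the martingale $M$ converges a.s.\ and $e^{-qT}M(T)\to 0$ trivially --- but you should state the dichotomy, or simply use the SLLN bound $M(T)=O(1+\langle M\rangle(T))$ a.s., which covers both cases at once; this is precisely why the paper prefers it.
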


\begin{proof}

By the It\^{o} formula,
\begin{equation}
\label{eq:BP3}
S_{\varphi}(T)S_{\psi}(T)- \int_0^T S_{\varphi}(t)dS_{\psi}(t)
 =
 \int_0^T \varphi(t)\psi(t)dt+ \int_0^TS_{\psi}(t)dS_{\varphi}(t),
\end{equation}
and it follows from \eqref{eq:BP1} that
$$
e^{-qT} \int_0^T|\varphi(t)\psi(t)|dt \leq C_1e^{-qT} (1+T^{r+1}) \to 0,\  T\to \infty.
$$
To estimate the second term on the right-hand side of \eqref{eq:BP3},
recall that, if $M=M(t)$ is a continuous square-integrable martingale, then,
by the strong law of large numbers, a finite limit
$$
\lim_{T\to \infty} \frac{M(T)}{1+\langle M\rangle (T)}
$$
exists with probability one (\cite[Corollary 2 to Theorem 2.6.10]{LSh3}).
As a result, if $F=F(t)$ is a function such that $F(T)\langle M\rangle (T)=\oas(T)$,
then $F(T)M(T)=\oas(T)$.

Next, consider
$$
N(t)= \int_0^tS_{\psi}(s)dS_{\varphi}(s)= \int_0^t S_{\psi}(s)\varphi(s)dW(s)\ \ {\rm with\ \ }
\langle N \rangle (t) = \int_0^t S_{\psi}^2(s)\varphi^2(s)ds.
$$
We need to show that $e^{-qT}\langle N \rangle (T)= \oas(T). $
By \eqref{eq:BP1},
\bel{BP4}
\langle S_{\psi}\rangle (t) = \int_0^t \psi^2(s)ds \leq C_2e^{2pt}(1+t^{2r}),
\ee
and therefore
\bel{BP5}
\lim_{t\to \infty} e^{-qt}\langle S_{\psi}\rangle (t)\varphi^2(t)\leq
\lim_{t\to \infty} C_3e^{-qt}(1+t^{3r})=0, \ q>0.
\ee
By assumption,   $\sup_t \langle S_{\psi}\rangle (t) =\infty$.
We then use \eqref{LIL-gen} and \eqref{BP4}  to conclude that
$$
\lim_{T\to \infty} \frac{Q^2(T)}{\langle S_{\psi}\rangle (T)\,  T^{\varepsilon}}=0, \ \varepsilon > 0.
$$
Therefore,
$$
 \lim_{T\to \infty} e^{-qT} S_{\psi}^2(T)\varphi^2(T)=
 \lim_{T\to \infty} T^{\varepsilon} e^{-qT}\langle S_{\psi}\rangle (T)\varphi^2(T)
 \frac{Q^2(T)}{\langle S_{\psi}\rangle (T)\,  T^{\varepsilon}} = 0,
 $$ which,
by L'Hospital's rule,  implies $\lim_{T\to \infty}e^{-pT}\langle N\rangle (T)=\oas(T)$.

{\em This completes the proof of Proposition \ref{ByParts}.}
\end{proof}

We will use Proposition \ref{ByParts} with $q=p$ to simplify various
stochastic integrals. As a quick illustration, let us verify \eqref{BP-00}.  Take $\varphi(t)=e^{-pt}$, $\psi(t)=e^{pt}$. Then, together with \eqref{xi-eta},
equality \eqref{eq:BP2} implies
\begin{equation*}
\begin{split}
e^{-pT}\int_0^T \left(\int_0^t e^{p(t-s)}dW(s)\right) dW(t)&=
\left(\int_0^T e^{-p(T-t)}dW(t)\right)\left(\int_0^T e^{-pt}dW(t)\right)
\\+
\oas(T)
&=\eta_p(T)\,\xi_p+\oas(T).
\end{split}
\end{equation*}

\section{Proofs of Theorems \ref{th2}--\ref{th4}}
\label{S4}

{\bf Proof of Theorem \ref{th2}}.

{\sc I. The ergodic case.}
See \cite[Remark 2 after Theorem 4.6.2]{Arato}.
 \vs
{\sc II. Non-ergodic case: Distinct real roots.}
If $p\not=q$, then \eqref{intr-meq} and \eqref{FS1}
imply
\begin{equation}
\label{sol-pq}
X(t)=V_p(t)- V_q(t),\ \
\dX(t)=pV_p(t)-qV_q(t),
\end{equation}
where
\begin{equation}
\label{sol-U}
\begin{split}
V_p(t)=e^{pt}U_p(t),\ U_p&=\frac{\dX(0)-X(0)q}{p-q}
+\frac{\sigma}{p-q} \int_0^t e^{-ps}dW(s),\\
V_q(t)=e^{qt}U_q(t),\ U_q&=\frac{\dX(0)-X(0)p}{p-q}
+\frac{\sigma}{p-q} \int_0^t e^{-qs}dW(s).
\end{split}
\end{equation}

By \eqref{Det-prop} and \eqref{N-prop}
\begin{equation}
\label{DN-X}
\begin{split}
 D(T;X,\dX)&=(p-q)^2D(T;V_p,V_q),\\
  N(T;X,\dX)&=(p-q)\Big(N(T,V_p,V_q)+N(T;V_q,V_p)\Big),\\
 N(T;\dX,X)&=-(p-q)\Big(pN(T,V_p,V_q)+qN(T;V_q,V_p)\Big).
 \end{split}
 \end{equation}
To complete the proof,
it now remains to use the specific expressions for the functions $V_p$ and $V_q$, which
we do next.

{\sc II(a). Roots of opposite sign:  $q<0<p$.}
 Using notations \eqref{xi-eta}, define  Gaussian random variable $\zeta_p$ by
$$
 \zeta_p=\frac{\dX(0)-X(0)q}{p-q}
 + \frac{\sigma}{p-q} \int_0^{+\infty} e^{-ps}dW(s)
 =\frac{\dX(0)-X(0)q}{p-q} + \frac{\sigma}{p-q}\xi_p
$$
  and note that
$$
 V_p(t)=e^{pt}\big( \zeta_p+\oas(t)\big),\ V_q(t)=\frac{\sigma}{p-q}\eta_{|q|}(t)
 +\frac{\dX(0)-X(0)p}{p-q}e^{qt}.
 $$
 Then
 \begin{align}
 \notag
 \int_0^T V_p^2(t)dt=\left(\frac{ \zeta_p^2}{2p}+\oas(T)\right)e^{2pT},\
 \int_0^T V_q^2(t)dt=\frac{\sigma^2T}{2|q|(p-q)^2}\big(1+\oas(T)\big),\\
 \notag
 \int_0^TV_p(t)V_q(t)dt=\int_0^Te^{pt}\big( \zeta_p+\oas(t)\big)V_q(t)dt
 =e^{pT}\big( \zeta_p\, \eta_{|q|}(T)+\oas(T)\big),\\
 \notag
  \int_0^T V_p(t)dW(t)=\sqrt{T}\,e^{pT}\,\oas(T),\ \ \
  \lim_{T\to \infty} \frac{1}{\sqrt{T}}\int_0^TV_q(t)dW(t)
  \wc\frac{\sigma}{p-q} \eta_{|q|}^{\perp}.
 \end{align}
 Plugging the results into \eqref{DN-X},
 \begin{equation*}
 \begin{split}
 D_T(T;X,\dX)=\left(Te^{2pT}\right) \left(\frac{ \zeta_p^2}{2p}\,\frac{\sigma^2}{2|q|}
 +\oas(T)\right),\\
 \lim_{T\to \infty} \frac{e^{-2pT}}{\sqrt{T}}N(T;V_p,V_q)
 \wc\frac{\sigma^2}{p-q}\frac{ \zeta_p^2}{2p}\eta_{|q|}^{\perp},\ \
 \lim_{T\to \infty} \frac{e^{-2pT}}{\sqrt{T}}N(T;V_q,V_p)\wc 0.
 \end{split}
 \end{equation*}
  Then both equalities in \eqref{c2.1}  follow from \eqref{residual1}.
\vs
{\sc II(b). Distinct positive roots: $0<q<p$.}
  When $p>q>0$, computations are very similar to the case $q<0<p$. The
  difference comes from the fact that, for $q>0$, we have
 $$
  V_q(t)=e^{qt}\left(\frac{\sigma}{p-q}\xi_q+\frac{\dX(0)-X(0)p}{p-q}+\oas(t)\right).
 $$
  Using Proposition \ref{ByParts} and  notations
  $$
  \zeta_p=\frac{\dX(0)-X(0)q}{p-q}+\frac{\sigma}{p-q}\,\xi_p,\
  \zeta_q=\frac{\dX(0)-X(0)p}{p-q}+\frac{\sigma}{p-q}\,\xi_q,
  $$
  we find
  \begin{align}
  \label{Det:p>q}
  D(T;X,\dX)= \frac{ \zeta_p^2\zeta_q^2 (p-q)^4}{4pq(p+q)^2} e^{2(p+q)T},\\
  \notag
 \lim_{T\to \infty} e^{-(q+2p)T}N(T;V_p,V_q)
 \wc\sigma\zeta_p^2\zeta_q\left(\frac{\eta_q}{2p}-\frac{\eta_p}{p+q}\right),\\
 \notag
 \lim_{T\to \infty}e^{-(q+2p)T}N(T;V_q,V_p)\wc 0.
 \end{align}
 It remains to observe that
 $$
 \frac{\eta_q}{2p}-\frac{\eta_p}{p+q}
 $$
 is a Gaussian random variable, independent of $(\zeta_p,\,\zeta_q)$,  with mean zero
 and variance $(p-q)^2/(8p^2q(p+q)^2)$.
  Then both equalities in \eqref{c4.1}  follow from \eqref{residual1}.

\vs
  {\sc II(c). Larger root is zero: $q<p=0$.}
 By \eqref{sol-pq} with $p=0$,
 \begin{equation}
 \label{th1-pr1}
 X(t)=V_0(t)-V_q(t),\ \dX(t)=-qV_q(t),
 \end{equation}
 and
 $$
 V_0(t)=U_0(t)=\frac{\dX(0)-X(0)q}{|q|}+\frac{\sigma}{|q|}W(t),\
 V_q=\frac{\dX(0)}{|q|}e^{qt}+\frac{\sigma}{|q|}\eta_{|q|}(t).
 $$

 Using \eqref{OU-LLN}, \eqref{OU-WC}, and \eqref{OU-LIL}, we find:
 \begin{align}
 \label{th1-pr3.1}
 \int_0^T V_q^2(t)dt= \frac{\sigma^2T}{2|q|^3}\left(1+\oas(T)\right),\\
 \label{th1-pr3.2}
 \int_0^T V_0^2(t)dt=\frac{\sigma^2T^2}{q^2}
 \left(\frac{1}{T^2}\int_0^T W^2(t)dt+\oas(T)\right),\\
 \label{th1-pr3.3}
 \int_0^T V_0(t)\dX(t)dt=T^{3/2}\oas(T),\\
 \label{th1-pr3.4}
 \lim_{T\to \infty} \frac{1}{\sqrt{T}}\int_0^TV_q(t)dW(t)\wc\frac{\sigma}{|q|} \eta_{|q|}^{\perp},\\
 \label{th1-pr3.5}
 \int_0^T U_0(t)dW(t)=\frac{\sigma T}{|q|}\left(\frac{W^2(T)-T}{2T} +\oas(T)
 \right).
 \end{align}
Self-similarity of the standard Brownian motion implies
 \begin{equation}
 \label{WSS}
 \frac{W^2(T)-T}{2T}\wc \frac{w^2(1)-1}{2},\ \
 \frac{1}{T^2}\int_0^T W^2(t)dt\wc \int_0^1 w^2(s)ds.
 \end{equation}

 Combining \eqref{th1-pr3.1}--\eqref{WSS} with \eqref{DN-X}
 yields
 \begin{align}
 \notag
 D(T;X,\dX)=\frac{\sigma^4 T^3}{2|q|^3}
 \left(\frac{1}{T^2} \int_0^T W^2(t)dt +\oas(T)\right),\\
 \notag
 \lim_{T\to \infty} T^{-5/2} N(T;X,\dX)
 \wc\frac{\sigma^4}{q^2}\eta_{|q|}^{\perp}
 \, \int_0^1 w^2(s)ds,\\
 \notag
 \lim_{T\to \infty} T^{-2}N(T;\dX,X)
 \wc\frac{\sigma^4}{4q^2}\big(w^2(1)-1\big).
 \end{align}
 Then both equalities in \eqref{c1.1}  follow from \eqref{residual1}.

 To show independence of $\eta$ and $w$, take a standard Brownian
 motion $\tilde{w}=\tilde{w}(t)$, $ t\in [0,1],$ that is independent of $w$ and
 apply  Proposition \ref{MLT} with
 $$
 M(t)=(w(t),\ \tilde{w}(t)),\ M_T(t)=\left(
 \frac{1}{\sqrt{T}}\int_0^{Tt}dW(s),\
 \frac{\sqrt{2|q|}}{\sqrt{T}}\int_0^{Tt}\eta_{|q|}(s)dW(s)\right).
 $$

{\sc II(d). Smaller root is zero: $q=0<p$.}
 We have
 \begin{align}
 \notag
 &X(t)=V_p(t)-V_0(t),\ \dX(t)=pV_p(t), \\
 \notag
 &V_0(t)=U_0(t)=\frac{\dX(0)-X(0)p}{p} + \frac{\sigma}{p}W(t),\
  V_p(t)=e^{pt}\big( \zeta_p+\oas(t)\big),\\
  \notag
 & \zeta_p=\frac{\dX(0)}{p}+\frac{\sigma}{p}\int_0^{\infty} e^{-pt}dW(t).
  \end{align}
  Then
  \begin{align}
 \notag
 \int_0^T V_p^2(t)dt= \frac{e^{2pT}}{2p}\left( \zeta_p^2+\oas(T)\right),&\
 \int_0^T V_0^2(t)dt=\frac{\sigma^2T^2}{p^2}
 \left(\frac{1}{T^2}\int_0^T W^2(t)dt+\oas(T)\right),\\
 \notag
 \int_0^T V_0(t)V_p(t)dt=Te^{pT}\oas(T),&\
 \int_0^TV_p(t)dW(t)= e^{pT}\big( \zeta_p\,\eta_{p}(T)+\oas(T)\big),\\
 \notag
 \int_0^T U_0(t)dW(t)&=\frac{\sigma T}{p}\left(\frac{W^2(T)-T}{2T} +\oas(T)
 \right).
 \end{align}
By \eqref{DN-X},
 \begin{align}
 \notag
 D(T;X,\dX)=\frac{\sigma^2 T^2e^{2pT}}{2p}
 \left( \zeta_p^2\,\frac{1}{T^2} \int_0^T W^2(t)dt +\oas(T)\right),\\
 \notag
  N(T;X,\dX)
 =\frac{\sigma^2}{2p}Te^{2pT}\left( \zeta_p^2\,\frac{W^2(T)-T}{2T} +\oas(T)
 \right),\\
 \notag
  N(T;\dX,X)
 =-\frac{\sigma^2}{2}Te^{2pT}\left( \zeta_p
 ^2\,\frac{W^2(T)-T}{2T} +\oas(T)
 \right),
 \end{align}
   and then both equalities in \eqref{c3.1} follow from \eqref{residual1}.
\vs
 {\sc III(a). Positive double root: $p=q>0$.}
With $x_1(t)=(1-qt)e^{qt}$, $x_2(t)=te^{qt}$,
 \eqref{intr-meq} becomes
 $$
 X(t)=X(0)(1-qt)e^{qt}+\dX(0)te^{qt}+\sigma\int_0^t (t-s)e^{q(t-s)}dW(s),\
 \dX(t)=qX(t)+Q(t),
 $$
 where
 $$
 Q(t)=\left(\dX(0)-X(0)q+\sigma\int_0^t e^{-qs}dW(s)\right)e^{qt}.
 $$
  If we define
 $$
 \zeta=\dX(0)-X(0)q+\sigma\int_0^{+\infty} e^{-qs}dW(s),
 $$
 then
 $$
 X(t)=te^{qt}\big(\zeta+\oas(t)\big),\ Q(t)=e^{qt}\big(\zeta+\oas(t)\big).
 $$
 By \eqref{Det-prop},
 \bel{pr5:D}
 D(T;X,\dX)=e^{4qT}\left(\frac{\zeta^4}{16q^4}+\oas(T)\right);
 \ee
 note that  the same result follows after passing to the
  limit $p\searrow q$ in \eqref{Det:p>q}.

Next, define
 $$
 \eta_{q,1}(t)=\int_0^te^{-q(t-s)}dW(s),\ \eta_{q,2}(t)
 =\frac{1}{t}\int_0^t s\,e^{-q(t-s)}dW(s),
 $$
 and observe that
 \begin{equation}
 \label{pr5:m}
 \lim_{T\to \infty} T\big(\eta_{q,1}(T)-\eta_{q,2}(T)\big)
 \wc \frac{\eta}{2q^{3/2}},
 \end{equation}
 where $\eta$ is a standard Gaussian random variable, independent of $\zeta$.

 Therefore,
 \begin{align}
 \notag
 \int_0^T X^2(t)dt = T^2e^{2qT}\left(\frac{\zeta^2}{2q}+\oas(T)\right),\ \ &
 \int_0^T Q^2(t)dt = e^{2qT}\left(\frac{\zeta^2}{2q}+\oas(T)\right),\\
 \notag
 \int_0^T X(t) Q(t)dt = Te^{2qt}\left(\frac{\zeta^2}{2q}+\oas(T)\right),\ \ &
 \int_0^T Q(t)dW(t)=e^{qT}\Big(\zeta\, \eta_{q,1}(T)+\oas(T)\big),\\
 \notag
 \int_0^T X(t)dW(t)&=Te^{qT}\Big(\zeta\, \eta_{q,2}(T)+\oas(T)\big).
 \end{align}
 To continue,
  \begin{align}
  \notag
  N(T;X,Q)=\frac{\sigma}{2q} T^2e^{3qT}
  \big(\eta_{q,1}(T)-\eta_{q,2}(T)\big)\big(\zeta^3+\oas(T)\big),\\
  \notag
  N(T;Q,X)=\frac{\sigma}{2q} Te^{3qT}
  \big(\eta_{q,2}(T)-\eta_{q,1}(T)\big)\big(\zeta^3+\oas(T)\big).
  \end{align}
  It remains to observe that
  $$
  N(T;X,\dX)=N(T;X,Q),\ N(T;\dX,X)=-qN(T;X,Q)+N(T;Q,X).
  $$
 Then both equalities in
 \eqref{c5.1}  follow from \eqref{residual1} and \eqref{pr5:m}.
\vs
{\sc III(b). Zero double root: $p=q=0$.}
In this case the result follows directly from \eqref{residual} using
 self-similarity of the standard
Brownian motion: $W(T\cdot)\wcL \sqrt{T}w(\cdot)$.
Recall the notations
$$
\mfz_1=\int_0^1 w(s)ds,\ \
\mfz_2=\int_0^1 w^2(s)ds,\ \
\mfz_3=\int_0^1\left( \int_0^t w(s)ds\right)^2 dt.
$$
With $\theta_1=\theta_2=0,$  equation  \eqref{eq00} becomes
$d\dX=\sigma\,dW$, and therefore
$$
\dX(t)=\dX(0)+\sigma\,W(t),\ X(t)=X(0)+\dX(0)t+\sigma\,\int_0^t W(s)ds.
$$
Then
\begin{align}
\notag
D(T;X,\dX)&\wc \frac{\sigma^4T^6}{4}\Big(4\mfz_2\mfz_3-\mfz_1^4+\oas(T)\Big),\\
\notag
N(T;X,\dX)&\wc
\frac{\sigma^4T^5}{2}\Big(\mfz_3\big(w^2(1)-1\big)-\mfz_1^2\big(w(1)\mfz_1-\mfz_2\big)+\oas(T)\Big),\\
\notag
N(T;\dX,X)&\wc
\frac{\sigma^4T^4}{4}\Big(4\mfz_2\big(w(1)\mfz_1-\mfz_2\big)-\mfz_1^2\big(w^2(1)-1\big)+\oas(T)\Big),
\end{align}
leading to \eqref{c6.1}. If $X(0)=\dX(0)=0$, then equalities in \eqref{c6.1}
hold for every $T>0$.
\vs
{\sc IV(a). Complex roots: $p=\sqrt{-1}\nu,\ \nu>0$. }
This case is the subject of \cite{Ning-Harmonic}. Below we outline the
main steps.

By \eqref{intr-meq},
\begin{equation*}
\begin{split}
X(t)=X(0)\cos(\nu t) +\frac{\dX(0)}{\nu}\sin(\nu t)
+\frac{\sigma}{\nu} \int_0^t \sin(\nu (t-s)) dW(s),\\
\dX(t)=-X(0)\nu\sin(\nu t) +{\dX(0)}\cos(\nu t)
+ \sigma\int_0^t \cos(\nu (t-s)) dW(s).
\end{split}
\end{equation*}
By Proposition \ref{MLT}, as $T\to \infty$, the pair
$$
\left(\frac{\sqrt{2}}{\sqrt{T}}\int_0^{Tt} \sin(\nu s)\, dW(s),\
\frac{\sqrt{2}}{\sqrt{T}}\int_0^{Tt} \sin(\nu s)\, dW(s),\ t\in [0,1]\right)
$$
converges in distribution  to a two-dimensional
standard Brownian motion $(w_1, w_2)$. Then
\begin{equation*}
\begin{split}
\lim_{T\to \infty} \frac{1}{T^2} \int_0^T X^2(t)dt \wc
 \frac{\sigma^2}{2\nu^2}\int_0^1(w_1^2(s)+w_2^2(s))ds,\\
\lim_{T\to \infty} \frac{1}{T^2} \int_0^T \dX^2(t)dt \wc
 \frac{\sigma^2}{2}\int_0^1(w_1^2(s)+w_2^2(s))ds,\\
 \lim_{T\to \infty} \frac{1}{T}\int_0^T X(t)dW(t) \wc
\frac{\sigma}{2\nu} \int_0^1 \big(w_1(s)dw_2(s)- w_2(s)dw_1(s)\big),\\
 \lim_{T\to \infty} \frac{1}{T}\int_0^T \dX(t)dW(t) \wc
\frac{\sigma}{2} \int_0^1 \big(w_1(s)dw_1(s)+ w_2(s)dw_2(s)\big).
\end{split}
\end{equation*}
Finally, \eqref{LIL-gen} implies $X^2(T)=T^2\oas(T)$. Then
both equalities in \eqref{HO-LD} follow from \eqref{residual}.
For details, see \cite[Sections 2--4]{Ning-Harmonic}.
\vs
{\sc IV(b). Complex roots: $p=\lambda+\sqrt{-1}\nu$,$\lambda>0$, $\nu>0.$}
The fundamental system of solutions of \eqref{DE-main} in this case is
$$
x_1(t)=\frac{e^{\lambda t}}{\nu}\left(\nu \cos\nu t-\lambda  \sin \nu t
\right),\ x_2(t)=\frac{1}{\nu} e^{\lambda t}\sin \nu t.
$$
Therefore,
\begin{align}
\label{CR-1}
X(t)&
=\frac{e^{\lambda t}}{\nu}\left((\dX(0)-X(0)\lambda)\sin\nu t+X(0)\nu \cos\nu t\right)
\\
\notag&+\frac{\sigma}{\nu}\int_0^t e^{\lambda (t-s)} \sin \nu (t-s)\, dW(s),\\
\label{CR-2}
\dX(t)&=\lambda X(t)+\nu Y(t),\\
\label{CR-21}
 Y(t)&=\frac{e^{\lambda t}}{\nu} \big((\dX(0)-X(0)\lambda)\cos \nu t-X(0) \nu \sin \nu t\big)\\
 & \notag+
\frac{\sigma}{\nu}\int_0^t e^{\lambda(t-s)}\cos\nu(t-s)dW(s).
\end{align}
Define Gaussian random variables
\begin{equation}
\label{CR-3}
\begin{split}
\mfu_c&=\frac{\dX(0)-X(0)\lambda}{\nu}+\frac{\sigma}{\nu} \int_0^{+\infty}
e^{-\lambda t} \cos\nu t\, dW(t),\\
\mfu_s&=-X(0)+\frac{\sigma}{\nu} \int_0^{+\infty}
e^{-\lambda t} \sin\nu t\, dW(t)
\end{split}
\end{equation}
and the functions
$$
V_c(t)=e^{\lambda t} \cos\nu t,\ V_s(t)=e^{\lambda t}\sin\nu t.
$$
Then
\begin{equation}
\label{CR-4}
X(t)=\mfu_cV_s(t)-\mfu_sV_c(t)+ e^{\lambda t}  \oas(t),\
Y(t)= \mfu_sV_s(t)+\mfu_cV_c(t)  +  e^{\lambda t} \oas(t).
\end{equation}
By \eqref{Det-prop},
\begin{equation}
\label{CR-5}
D(T;X,\dX)=\frac{e^{4\lambda T}\nu^4}{16\lambda^2(\lambda^2+\nu^2)}
\big((\mfu_c^2+\mfu_s^2)^2+\oas(T)\big).
\end{equation}

Next, define Gaussian random variables
\begin{equation}
\label{CR-6}
\mfh_c(T)=\int_0^T e^{\lambda(t-T)}\cos\nu t\, dW(t),\
\mfh_s(T)=\int_0^T e^{\lambda(t-T)}\sin\nu t\, dW(t).
\end{equation}
For each $T>0$, the vector $(\mfh_c(T), \mfh_s(T))$ is bivariate
normal with zero mean,
\begin{align}
\sigma_c^2(T)=\bE\mfh_c^2(T)&=\frac{1}{4\lambda} \left(
1+\frac{\lambda}{\sqrt{\lambda^2+\nu^2}}\cos(2\nu T-\phi)\right)+
o(1),\\
\sigma_s^2(T)=\bE\mfh_s^2(T)&=\frac{1}{4\lambda} \left(
1-\frac{\lambda}{\sqrt{\lambda^2+\nu^2}}\cos(2\nu T-\phi)\right)+o(1),\\
\sigma_{cs}(T)=\bE\mfh_c(T)\mfh_s(T)&= \frac{1}{4\sqrt{\lambda^2+\nu^2}}\sin(2\nu T - \phi)+
o(1),
\end{align}
where
$\cos\phi=\lambda/\sqrt{\lambda^2+\nu^2}$,
$\sin\phi=\nu/\sqrt{\lambda^2+\nu^2}$, and
$o(1)$ denotes a non-random function $\epsilon=\epsilon(T)$ such that
$\lim_{T\to \infty} \epsilon(T)=0$.
Thus, the family $\{\mfh_c(T), \mfh_s(T),\ T>0\}$ is relatively
compact, with limit points being bivariate Gaussian
vectors  $(\mfh_c,\mfh_s)$. Computations similar to \eqref{indep} show that
each vector $(\mfh_c,\mfh_s)$ is independent of $(\mfu_c,\mfu_s)$.

On the other hand,
\begin{equation}
\label{CR-N}
\begin{split}
N(T;V_c,V_s)&=\sigma e^{3\lambda T} \big(\sigma_c^2(T)\mfh_s(T)-\sigma_{cs}\mfh_c(T)\big),\\
N(T;V_s,V_c)&=\sigma e^{3\lambda T} \big(\sigma_s^2(T)\mfh_c(T)-\sigma_{cs}\mfh_s(T)\big).
\end{split}
\end{equation}
To complete the proof, it remains  to
express $N(T;X,\dX)$ and $N(T;\dX,X)$ in terms of
$N(T;V_c,V_s)$ and $N(T;V_s,V_c)$ using \eqref{N-prop} and
Proposition \ref{ByParts}.

{\em This completes the proof of Theorem \ref{th2}. }

\vskip 0.1in

{\bf Proof of Theorem \ref{th3}.}
 The proof is straightforward analysis of the computations
 in the proof of Theorem \ref{th2}, with the goal to find  suitable normalization leading
to the Gaussian limit. Let us illustrate this in the most interesting case, when
$p=\lambda+\sqrt{-1} \nu$ and the corresponding rate matrix $R_T$ is
not diagonal. In this case,  \eqref{CR-1}--\eqref{CR-4}
and \eqref{CR-6}, together with Proposition \ref{ByParts},  imply
\begin{equation}
\label{CR-main}
\begin{split}
\left(
\begin{array}{l}
\int_0^T X(s)dW(s)\\
\int_0^T \dX(s)dW(s)
\end{array}
\right)&=
-\frac{e^{\lambda T}}{\nu}
\left(
\begin{array}{rr}
-1 & 0\\
-\lambda & \nu
\end{array}
\right)
\left(
\begin{array}{rr}
\mfu_s & -\mfu_c\\
\mfu_c & \mfu_s
\end{array}
\right)
\left(
\begin{array}{l}
\mfh_c(T)+\oas(T)\\
\mfh_s(T)+\oas(T)
\end{array}
\right)\\
& = A_T^{-1}\big(B(\mfu_s,\mfu_c)\big)^{-1}
\left(
\begin{array}{l}
\mfh_c(T)+\oas(T)\\
\mfh_s(T)+\oas(T)
\end{array}
\right).
\end{split}
\end{equation}
On the other hand, according to \eqref{residual},
$$
\left(
\begin{array}{l}
\wth_{2,T}-\theta_2\\
\wth_{1,T}-\theta_1
\end{array}
\right)=
\Psi_T^{-1}
\left(
\begin{array}{l}
\int_0^T X(s)dW(s)\\
\int_0^T \dX(s)dW(s)
\end{array}
\right).
$$
That is,
$$
B(\mfu_s,\mfu_c)A_T\Psi_T\,\big(\widehat{\bld{\theta}}_T-\bld{\theta}\big)=
\left(
\begin{array}{l}
\mfh_c(T)+\oas(T)\\
\mfh_s(T)+\oas(T)
\end{array}
\right).
$$
{\em This completes the proof of Theorem \ref{th3}. }

\vskip 0.1in

{\bf Proof of Theorem \ref{th4}.}
 The proof is straightforward analysis of the computations
 in the proof of Theorem \ref{th2}, this time with an emphasis on
 the asymptotic behavior of the matrix $\Psi_T$ and the
 vector $\int_0^T\bld{X}(t)dW(t)$. The analysis is easy  in the ergodic case
 and also  when  $p=0$ or $p=\sqrt{-1}\nu$.
 When $p=\lambda+\sqrt{-1}\nu$, the proof is essentially complete
 after \eqref{CR-main}.

Here are the results when $p>0$.
If $q< p$, then
$$
\int_0^T\bld{X}(t)dW(t)= e^{pT}\zeta_p\big(\eta_p(T)+\oas(T)\big)\bld{b}_p,\
\Psi_T=\frac{\zeta^2_p}{2p}\,e^{2pT} \Big(\bld{b}_p\, \bld{b}_p^{\top}+\oas(T)\Big).
$$
If $q=p$, then
$$
\int_0^T\bld{X}(t)dW(t)=Te^{pT}\zeta\big(\eta_{p,2}(T)+\oas(T)\big)\bld{b}_p,\
\Psi_T=\frac{\zeta^2}{2p}\,T^2e^{2pT} \Big(\bld{b}_p\, \bld{b}_p^{\top}+\oas(T)\Big).
$$
Note also that
$$
\big(\bld{b}_p\, \bld{b}_p^{\top}\big)^2=(1+p^2)\bld{b}_p\, \bld{b}_p^{\top}.
$$
{\em This completes the proof of Theorem \ref{th4}. }

\section{Summary and Discussion}
\label{S5}
The maximum likelihood estimator \eqref{OU-MLE} in CAR(1), that is,
 the one-dimensional
OU process \eqref{OU}, has three types of asymptotic regimes, depending on the sign of
the parameter $\theta$. For CAR(2), that is, the second-order equation
 \eqref{eq00} with two unknown parameters, while the MLE still has the same form
 and is strongly consistent for all values of the parameters, the number of different
 asymptotic regimes is nine.
This jump in complexity underlines the challenges related to the analysis
of the general estimation problem, either for the $N$-th order
linear equation (CAR$(N)$) or an $N$-by-$N$ system.

If equation \eqref{CharEq} has real roots
and one of them is positive, then
the following two features seem to be common:
\begin{enumerate}
\item The  rate of convergence of the estimators
is determined by the smaller root of equation \eqref{CharEq};
\item The asymptotic behavior of the normalized log-likelihood
ratio is determined by the larger root of equation \eqref{CharEq}.
\end{enumerate}
It is interesting that the problem is mich easier for homogeneous equations, that is,
multi-dimensional analogues of the geometric Brownian motion.
In those models, estimation of the drift matrix in any number of dimensions
 leads to the LAN situation as long as the diffusion matrix is non-degenerate;
see \cite{KhasJan} for details.

The finite-difference equation arising from discretization of \eqref{eq00}
presents other challenges. In particular, the noise sequence driving the equation is
no longer independent; see \cite[Section 5]{Ning-Harmonic} for details.
While these difficulties can be resolved in the ergodic case (\cite{BDY-CAR}), the
general case remains unsolved.

\section{Acknowledgement} The work of both authors
 was partially supported
by the NSF Grant DMS-0803378.


\def\cprime{$'$} \def\cprime{$'$} \def\cprime{$'$} \def\cprime{$'$}
  \def\cprime{$'$} \def\cprime{$'$}
\providecommand{\bysame}{\leavevmode\hbox to3em{\hrulefill}\thinspace}
\providecommand{\MR}{\relax\ifhmode\unskip\space\fi MR }
\providecommand{\MRhref}[2]{%
  \href{http://www.ams.org/mathscinet-getitem?mr=#1}{#2}
}
\providecommand{\href}[2]{#2}

\vskip 0.2in

\end{document}